\documentclass[11pt,draftcls,onecolumn]{IEEEtran}
\usepackage{graphicx}
\usepackage{amssymb}
\usepackage{amsmath}
\usepackage{epstopdf}
\usepackage{dsfont}
\usepackage{amsthm}
\usepackage{xcolor}
\usepackage{colortbl}
\usepackage{algorithm}
\usepackage{algorithmic}
\usepackage{todonotes}
\usepackage{booktabs}
\usepackage{afterpage}
\usepackage{mathtools}
\usepackage{multirow}
\usepackage{stfloats}


\newcommand {\be}{\begin{equation}}
\newcommand {\ee}{\end{equation}}

\newcommand {\beW}{\begin{equation*}}
\newcommand {\eeW}{\end{equation*}}

\newcommand {\bsp}{\begin{split}}
\newcommand {\esp}{\end{split}}

\newcommand {\bea}{\begin{eqnarray}}
\newcommand {\eea}{\end{eqnarray}}

\newcommand {\beaW}{\begin{eqnarray*}}
\newcommand {\eeaW}{\end{eqnarray*}}

\newcommand {\bM}{\begin{bmatrix}}
\newcommand {\eM}{\end{bmatrix}}

\newcommand {\bi}{\begin{itemize}}
\newcommand {\ei}{\end{itemize}}

\newcommand {\ben}{\begin{enumerate}}
\newcommand {\een}{\end{enumerate}}

\newcommand {\bal}{\begin{align}}
\newcommand {\eal}{\end{align}}

\newcommand {\balW}{\begin{align*}}
\newcommand {\ealW}{\end{align*}}

\newcommand {\bcm}{\begin{columns}}
\newcommand {\ecm}{\end{columns}}
\newcommand {\bc}{\begin{column}}
\newcommand {\ec}{\end{column}}



\newtheorem{thm}{Theorem}
\newtheorem{prop}{Proposition}
\newtheorem{corol}{Corollary}

\newtheorem{assumption}{Assumption}


\DeclareMathOperator{\Xs}{\mathcal{X}}

\DeclareMathOperator{\Prb}{\mathbb{P}}

\DeclareMathOperator{\eps}{\varepsilon}
\DeclareMathOperator{\rcp}{\text{RCP}}

\DeclareMathOperator{\scp}{\text{SCP}}
\DeclareMathOperator{\ccp}{\text{CCP}}

\let\Re\relax
\DeclareMathOperator{\Re}{\mathbb{R}}
\DeclareMathOperator{\Ne}{\mathbb{N}}

\DeclarePairedDelimiter\parens{\lparen}{\rparen}
\DeclarePairedDelimiter\bracks{\{}{\}}
\DeclarePairedDelimiter\sqbracks{[}{]}


\date{\today}
\title{On the computational complexity and generalization properties of multi-stage and recursive scenario programs}

\author{Nikolaos~Kariotoglou, Kostas~Margellos and John~Lygeros
\thanks{N. Kariotoglou and J. Lygeros are with the Automatic Control Laboratory, Department of Information Technology and Electrical Engineering, ETH Z\"urich, Z\"urich 8092, Switzerland (e-mail: karioto@control.ee.ethz.ch; lygeros@control.ee.ethz.ch)}\thanks{K. Margellos is with the Department of Industrial Engineering and Operations Research, UC Berkeley, Sutardja Dai Hall 330, Berkeley CA 94720, United States (e-mail: kostas.margellos@berkeley.edu)}\thanks{The work of N. Kariotoglou was supported by the Swiss National Science Foundation under grant number $200021\_137876$.}\vspace{-1\baselineskip}}

\begin{document}
\maketitle
\begin{abstract}
We discuss the computational complexity and feasibility properties of scenario based techniques for uncertain optimization programs. We consider different solution alternatives ranging from the standard scenario approach to recursive variants, and compare feasibility as a function of the total computation burden. We identify trade-offs between the different methods depending on the problem structure and the desired probability of constraint satisfaction. Our motivation for this work stems from the applicability and complexity reduction when making decisions by means of recursive algorithms. We illustrate our results on an example from the area of approximate dynamic programming.
\end{abstract}
\begin{IEEEkeywords}
Scenario approach, randomized optimization, uncertain systems, approximate dynamic programming.
\end{IEEEkeywords}

\section{Introduction}

Robust optimization comes up naturally in a range of problems from finance to robotics (\cite{kouvelis1997robust,bertsimas2011theory,ben2002robust}). Uncertain data is often present in the formulation of a decision making problem and the optimal solution is required to be robust against any possible uncertainty realization \cite{ben2009robust}. However, uncertainty may take values from an infinite and possibly unbounded set, which we might not know analytically, giving rise to a robust optimization problem that is in general not tractable \cite{calafiore2000randomized,calafiore2007survey,petersen2014robust}. A significant amount of research has concentrated on robust problems with structural characteristics and uncertainty sets of specific geometry for which robust decisions can be made by means of a tractable optimization program \cite{bertsimas2004price}.

An alternative way to deal with data uncertainty is to
formulate a chance constrained variant of the initial problem where the optimal decision is allowed to violate the robust constraint on a set of pre-specified measure. The authors in \cite{ben2009robust,bertsimas2006tractable} provide explicit solutions to such problems under assumptions on the probability distribution of the uncertainty. To avoid such assumptions one can make use of uncertainty samples (either based on historical data or via a scenario generation model) and construct decisions that satisfy the system constraints only for the sampled uncertainty scenarios. The feasibility and performance properties of the solution can be generalized to quantify the confidence with which the optimizer of the scenario program satisfies the constraints for uncertainty realizations different than those used in the optimization process, providing a probabilistic link between scenario based and chance constrained optimization. The \textit{scenario approach} introduced in \cite{calafiore2006,campi2008exact} can be used to provide such feasibility generalization statements for convex optimization problems. Beyond feasibility guarantees, \cite{LecLygMac_2010,kanamori2012worst,mohajerin2014performance} provide bounds on the amount of constraint violation and probabilistic performance. Generalization properties of similar nature can be obtained for non-convex optimization programs as well, using VC theoretic results \cite{vidyasagar2002theory,tempo2004algorithms,alamo2010sample}; the complexity, however, of the resulting solution depends on the so-called VC dimension (see \cite{vidyasagar2002theory} for a precise definition), which is in general difficult to compute.

Here we focus on the generalization properties of scenario based convex optimization problems using the scenario approach \cite{calafiore2006,campi2008exact,georg2013,calafiore2013random}. The scenario approach deals with robust and chance constrained convex optimization problems by solving sampled programs constructed using a finite number of samples. The method provides bounds on the number of samples needed to provide guarantees about the feasibility of the optimal solution of the sampled program with respect to the original one. The number of required samples determines the total number of constraints that, together with the number of decision variables and the type of problem (linear program, quadratic program, second-order cone program, semi-definite program, etc.), determine the overall computation effort. The given bounds scale well with certain structural quantities of the underlying problem and with the design parameters, apply to any problem under relatively mild assumptions and can be shown to be tight in a specific class of problems \cite{campi2008exact, calafiore2010random}. However, when one considers optimization problems with additional structure on the constraints, the generic bounds on the number of samples are not a sufficient performance measure. The same guarantees on the feasibility of a scenario based solution may be obtained by formulating several alternative scenario programs, each with a potentially different number of decision variables and constraints and hence different computational complexity. Here we investigate these trade-offs for a class of recursive optimization problems that naturally arises in applications such as stochastic model predictive control (SMPC) \cite{calafiore2013robust} and approximate dynamic programming (ADP) \cite{kariotoglou2014adp}. We consider two alternative structures, one with a single convex optimization problem with multiple constraint functions and one where the constraint functions are coupled. We show how, besides the standard scenario program, other types of scenario programs can be formulated for generic problems that respect these structures. These alternatives provide the same feasibility guarantees at potentially lower computation cost. We demonstrate this trade-off by benchmarking on a particular class of algorithms (primal-dual) and a particular class of problems (robust second-order cone problems). We also show how the stage-wise confidence and the violation level, typically treated as parameters in scenario programs, can be chosen by means of a convex optimization program to reduce the overall computation time. We demonstrate our results by applying them to a particular ADP algorithm developed for reachability problems.

Section \ref{sec:set_up} provides a statement of the problem under consideration. In Section \ref{sec:gen} we present the different scenario based alternatives along with a pair of convex optimization problems that choose the stage-wise confidence and violation probability levels that result in the most favorable computational complexity. In Section \ref{sec:trade} we discuss the trade-off between the feasibility properties and the computational complexity of each alternative. Section \ref{sec:applications} illustrates some features of the different algorithmic alternatives by means of a numerical example arising in ADP, while Section \ref{sec:conclusion} concludes the paper with some ideas on future research directions.

\emph{Notation:} Let $\mathbb{R}$ denote the real numbers, $\mathbb{N}$ the natural numbers and $\mathbb{N}_+$ the positive natural numbers. In the derivations below all uncertainty samples are extracted from a (possibly unknown) uncertainty set $\Delta$ according to a fixed, possibly unknown probability measure $\mathbb{P}$. $\mathbb{P}^S$ denotes the corresponding product measure for some $S \in \mathbb{N}_+$. We use i.i.d for identically and independently distributed uncertainty samples.
Operator $|\cdot|$ denotes the cardinality of its argument, $\dim(A)$ denotes the dimension of a linear space $A$ and $x \models y$ implies that $x$ satisfies the statement in $y$.

\section{Convex optimization programs with multiple robust constraints} \label{sec:set_up}

Consider a compact convex set $\mathcal{X}\subseteq \Re^d$, a possibly unbounded uncertainty set $\Delta\subseteq\Re^w$, a convex cost function $f:\Xs\rightarrow \Re$ and a set of $M\in\Ne_+$ convex constraint functions $g_i:\Xs \times \Delta \rightarrow \Re$, $i=1,\dots,M$; our results also extend to non-real valued (e.g. binary valued) uncertainties as long as they take values in a probability space. We are concerned with robust convex optimization problems (RCP) of the form:
\begin{align}
\rcp:
\begin{cases}
	\begin{split}
	\min_{x\in\mathcal{X}}\quad &f(x)\\
	 \text{s.t} \quad &g_i\left(x,\delta\right) \leq 0,\ \forall \delta\in\Delta, \ \forall i\in\{1,\dots,M\}.
	\end{split}
	\end{cases}
	\label{eq:rob_opt}
\end{align}

The set $\Delta$ may be infinite and possibly unbounded, rendering (\ref{eq:rob_opt}) a convex, semi-infinite optimization program. For such problems there is no general algorithm to obtain a solution, unless particular assumptions on the structure of $\Delta$ and the functions $g_i$ are made (see for example \cite{bertsimas2004price,ben1999robust}).

A common approach to approximate the solution is to impose the constraints on a finite number of uncertainty instances. To this end, consider $S \in\mathbb{N}_+$ i.i.d samples $\{\delta^j\}_{j=1}^S$ extracted from $\Delta$ according to some, possibly unknown, underlying probability distribution, and a collection $\bracks*{\Delta_i}_{i=1}^M$ of $M$ subsets of $\{\delta^j\}_{j=1}^S$ such that for each $\delta \in \{\delta^j\}_{j=1}^S$ there exists $i$ so that $\delta \in \Delta_i$, i.e. the sets may be overlapping but each $\delta$ belongs in at least one of them. The interpretation is that for each $i=1,\ldots,M$, the corresponding constraint $g_i(x,\delta)$ should be satisfied for all $\delta \in \Delta_i$, but not necessarily for all $\delta \in \Delta$. Problem (\ref{eq:rob_opt}) is then approximated by a scenario convex optimization program ($\text{SCP}$) of the form:
\begin{align}
\text{SCP}\sqbracks*{\Delta_1,\dots,\Delta_M}:
\begin{cases}
	\begin{split}
	\min_{x\in\mathcal{X}}\quad &f(x)\\
	 \text{s.t} \quad &g_i\left(x,\delta\right) \leq 0,\forall \delta\in\Delta_i, \ \forall i\in\{1,\dots,M\}.
	\end{split}
	\end{cases}
	\label{eq:rob_opt_samp}
\end{align}
This is a convex optimization program with a finite number of decision variables and constraints, that can be solved to optimality by various numerical solvers (e.g. CPLEX, Gurobi, MOSEK).
We impose the following assumption on $\text{SCP}[\Delta_1,\dots,\Delta_M]$:
\begin{assumption} \label{ass:feas_uniq}
For any set $\{\delta^j\}_{j=1}^S$ and collection of subsets $\bracks*{\Delta_i}_{i=1}^M$ with $S,M\in \mathbb{N}_+$, $\text{SCP}[\Delta_1,\dots,\Delta_M]$ is feasible, its feasibility region has a non-empty interior and its minimizer $x^*[\Delta_1,\ldots,\Delta_M] :~ \Delta^{S} \rightarrow \mathcal{X}$ is unique.
\end{assumption}

We refer to \cite{calafiore2006}, \cite{calafiore2010random} for details on how the feasibility and uniqueness assumption can be relaxed; however, we keep these assumptions here to streamline the presentation of our results. Measurability of the minimizer $x^*\sqbracks*{\Delta_1,\dots,\Delta_M}$ is assumed as needed, see \cite{mohajerin2014performance,grammatico2014} for details.

Note that problem (\ref{eq:rob_opt_samp}) and its minimizer are parametrized by the sets $\Delta_1,\dots,\Delta_M$. However, once the sets $\Delta_i$ are fixed, the unique (under Assumption \ref{ass:feas_uniq}) minimizer $x^* [\Delta_1,\dots,\Delta_M]$ of $\text{SCP}[\Delta_1,\dots,\Delta_M]$ is a mapping from $\Delta^{S}$ to $\mathcal{X}$ and satisfies $g_i(x^*,\delta)\leq 0$, for all $\delta\in\Delta_i$ and  $i\in\left\{1,\dots,M\right\}$. One of the challenges concerning this type of problem is to analyze the properties of $x^*$ in terms of feasibility (satisfiability of $g_i(x^*,\delta)\leq 0$ for all $\delta \in \Delta$) and performance (optimality of $f(x^*)$). Following the standard literature on the scenario approach \cite{calafiore2006,campi2008exact} we concentrate here on the feasibility properties of $x^*$ as a function of the algorithm used to construct the solution; for a discussion on performance issues see \cite{kanamori2012worst, mohajerin2014performance}. 
We establish that, depending on problem structure, there may be different ways of formulating the scenario program as a function of the choice of the number of samples $S$ and the partition sets $\Delta_i$. The computation effort necessary to solve the corresponding scenario programs differs, despite the fact that solutions have comparable feasibility properties. We will investigate the trade-offs between different design choices in the context of second order cone problems (SOCP) solved via primal-dual algorithms which are in general known to be of $\mathcal{O}\parens*{\parens*{n+m}^3}$ complexity, where $n$ denotes the dimension of the decision space and $m$ the total number of constraints. Our motivation stems from the fact that a wide range of control-inspired optimization programs are SOCP, while primal-dual algorithms provide reliable termination and optimality conditions by iteratively reducing the duality gap. With straightforward modifications, related statements can be made for other classes of algorithms (e.g. gradient methods) and other classes of problems (linear programs, quadratic programs, semi-definite programs etc.).

\section{Feasibility properties of scenario convex programs} \label{sec:gen}

We introduce four different approaches to formulate the scenario program: the standard scenario approach, the multi-stage scenario approach, the recursive scenario approach using the same samples at every recursive step and the recursive scenario approach using different samples at every recursive step. The standard scenario approach is the most general and applies to all problems in the form of $\rcp$. The multi-stage and recursive counterparts assume particular structure on the constraint functions and exploit it to reduce computational complexity while maintaining similar feasibility properties. To streamline the comparison between different methods, we present here their main characteristics and devote the next section to discussing relative advantages. 
\subsection{The standard scenario approach}
\label{sec:scp}
Let $\bar{\Delta} = \{\delta^j\}_{j=1}^S$ and assume that $\Delta_1=\ldots=\Delta_M=\bar{\Delta}$, in other words enforce each constraint for all elements in $\bar{\Delta}$. Denote by $\text{SCP} [\bar{\Delta}]$, $x^* [\bar{\Delta}]$ the resulting instance of
$\text{SCP}[\Delta_1,\dots,\Delta_M]$ and its minimizer, respectively. For each $(x, \delta)$, let $g(x,\delta):=\max_{i=1,\ldots,M}g_i(x,\delta)$. The constraints $g_i\left(x,\delta\right) \leq 0, \ \forall \delta\in \bar{\Delta},\ \forall i\in\{1,\dots,M\}$ are equivalent to $g(x,\delta) \leq 0, \ \forall \delta\in \bar{\Delta}$.
Problem $\text{SCP} [\bar{\Delta}]$ was first studied in terms of feasibility in \cite{calafiore2006}. The following Theorem was then shown in \cite{campi2008exact}.

\begin{thm}[\cite{campi2008exact}, Theorem 2.4]
\label{thm:scp}
Choose $\varepsilon, \beta \in (0,1)$ and fix $S\geq S(\eps,\beta,d)$ where
\begin{align}
	\begin{split}
	S(\varepsilon, \beta, d):=\min \left\{ N\in\Ne \ \bigg | \ \sum_{i=0}^{d-1} \binom{N}{i}\varepsilon^i(1-\varepsilon)^{N-i} \leq \beta\right\}.
	\end{split}
	\label{eq:sample_bound}
\end{align}
Extract $S$ samples i.i.d from $\Delta$ according to a probability measure $\Prb$, construct $\Delta_1 = \ldots = \Delta_M = \bar{\Delta}$ and formulate $\scp[\bar{\Delta}]$. Under Assumption \ref{ass:feas_uniq}, the minimizer $x^*[\bar{\Delta}]$ of $\scp[\bar{\Delta}]$ satisfies the chance constraint,
\begin{align}
\ccp_{\eps}:
\Prb\sqbracks*{g\parens*{x^*[\bar{\Delta}],\delta} > 0} \leq \eps
\label{eq:echance}
\end{align}
with confidence (measured with respect to $\mathbb{P}^S$) at least $1-\beta$.
\end{thm}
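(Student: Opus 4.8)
The plan is to reduce the statement to the single-constraint version of the scenario approach and then run the classical support-constraint counting argument. First, observe---as already noted in the excerpt---that with $\Delta_1=\dots=\Delta_M=\bar\Delta$ the constraints of $\scp[\bar\Delta]$ coincide with $g(x,\delta)\le 0$ for all $\delta\in\bar\Delta$, where $g(x,\delta)=\max_i g_i(x,\delta)$ is, for each fixed $\delta$, a convex function of $x$ (a pointwise maximum of convex functions). Thus it suffices to treat a scenario program with the single convex constraint function $g$. Writing $V(x):=\Prb[g(x,\delta)>0]$ for the violation probability of a point $x$, the goal becomes $\Prb^S[V(x^*[\bar\Delta])>\eps]\le\beta$ for every $S\ge S(\eps,\beta,d)$.

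Next I would introduce \emph{support constraints}: $\delta^j\in\bar\Delta$ is a support constraint of $\scp[\bar\Delta]$ if deleting it changes the minimizer. The structural ingredient---and the place where convexity of $f$ and the $g(\cdot,\delta)$ together with the feasibility/interior/uniqueness content of Assumption~\ref{ass:feas_uniq} enter---is that $\scp[\bar\Delta]$ has at most $d$ support constraints; this Helly-type bound is established in \cite{calafiore2006} via a Carath\'eodory/exchange argument on the constraints active at the optimum, and I would simply invoke it. A technical nuisance to dispatch here is \emph{degeneracy}: when there are strictly fewer than $d$ support constraints, one must argue---by a perturbation or monotonicity argument---that the violation probability is stochastically no larger than in the ``fully supported'' case with exactly $d$ support constraints, so that it is enough to bound the latter.

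The core is then a counting argument over the $\binom{S}{d}$ candidate support sets. For $I\subseteq\{1,\dots,S\}$ with $|I|=d$, let $x_I^*$ denote the minimizer of the program that keeps only the constraints indexed by $I$. In the fully supported case the events $\{x^*[\bar\Delta]=x_I^*\}$ partition $\Delta^S$ up to a null set, and by i.i.d.\ symmetry each has probability $\binom{S}{d}^{-1}$; on such an event the remaining $S-d$ samples must all satisfy $g(x_I^*,\cdot)\le 0$, and conditioning on $\{\delta^j:j\in I\}$ (which fixes $x_I^*$) those samples are independent, each feasible with probability $1-V(x_I^*)$. Assembling these pieces and matching the resulting expression to the incomplete Beta function shows that $V(x^*[\bar\Delta])$ is distributed as $\mathrm{Beta}(d,S-d+1)$, so that
\begin{align*}
\Prb^S\big[V(x^*[\bar\Delta])>\eps\big]=\sum_{i=0}^{d-1}\binom{S}{i}\eps^i(1-\eps)^{S-i},
\end{align*}
which becomes ``$\le$'' for a general instance after invoking the degeneracy remark. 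Since, by definition of $S(\eps,\beta,d)$ in \eqref{eq:sample_bound}, the right-hand side is $\le\beta$ exactly when $S\ge S(\eps,\beta,d)$, and since $\{g(x^*[\bar\Delta],\delta)>0\}$ is precisely the violation event, we conclude $x^*[\bar\Delta]\models\ccp_\eps$ with confidence at least $1-\beta$. I expect the main obstacle to be this last step: obtaining the \emph{exact} Beta law---rather than the cruder bound $\binom{S}{d}(1-\eps)^{S-d}$ of \cite{calafiore2006}---requires the delicate combinatorial bookkeeping of \cite{campi2008exact}, and the reduction from arbitrary instances to fully supported ones must be carried out carefully for that identity to transfer.
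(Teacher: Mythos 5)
This theorem is not proved in the paper at all: it is imported verbatim from \cite{campi2008exact} (Theorem 2.4 there), so there is no in-paper argument to compare against. Your sketch is a correct reconstruction of the route taken in the cited reference: the reduction to a single convex constraint via $g=\max_i g_i$, the definition of support constraints and the Helly-type bound of at most $d$ of them from \cite{calafiore2006}, the reduction of degenerate instances to fully supported ones, and the partition of $\Delta^S$ into the exchangeable events $\{x^*[\bar{\Delta}]=x_I^*\}$ leading to the exact $\mathrm{Beta}(d,S-d+1)$ law for the violation probability, whose survival function at $\eps$ is precisely the binomial tail in \eqref{eq:sample_bound}. The two genuinely delicate points are exactly the ones you flag: (i) the degeneracy reduction (done in \cite{campi2008exact} by a perturbation argument showing the general case is dominated by the fully supported one), and (ii) the ``assembling'' step, which is not a single conditioning computation --- since $V(x_I^*)$ is itself random, the exact Beta law is extracted from a family of moment identities obtained by running the partition/symmetry argument for every sample size $m\geq d$, not just $m=S$. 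You correctly defer both to the reference rather than claiming to have proved them, so as a proof \emph{outline} the proposal is sound; as a self-contained proof it would need those two ingredients filled in. For the purposes of this paper, the appropriate ``proof'' remains the citation.
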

Using the satisfiability  notation ``$\models$'' (along the lines of \cite{mohajerin2014performance}), the statement in Theorem \ref{thm:scp} can be compactly written as $\Prb^{S}\sqbracks*{x^*[\bar{\Delta}]\models \ccp_{\eps} }\geq 1-\beta$.
The interpretation of Theorem \ref{thm:scp} is that, with certain confidence, the solution of the scenario convex program satisfies the robust constraint apart from a subset of the uncertainty space with measure at most $\eps$. The computational complexity associated with constructing $x^*[\bar{\Delta}]$, along with the feasibility properties of Theorem \ref{thm:scp}, depend on the choice of $\eps,\beta$ and the number of decision variables $d$ that implicitly affect the number of constraints (inspect \eqref{eq:sample_bound}). The exact effect of each parameter is discussed in Section \ref{sec:trade}. Note that Theorem \ref{thm:scp} remains unaffected if $d$ is replaced by any upper bound on the number of the so-called support constraints (see \cite{calafiore2006} for a precise definition) other than the dimension of the decision space. Refinements along this direction are discussed in \cite{georg2013,georg2014smpc,zhang2014structured}.

\subsection{The multi-stage scenario approach}
\label{sec:ext_scp}
We impose here additional structure on the $\rcp$ by assuming that for any $\delta \in \Delta$ and for each $i=1,\ldots,M$, the constraint function $g_i(\cdot,\delta)$ depends on some (i.e. not necessarily all) of the decision variables. The set-up is then similar to the structure considered in \cite{georg2013}, where the authors studied optimization programs with multiple chance constraints. For each $i=1,\ldots,M$, let $\mathcal{X}_i\subseteq \Xs$ denote the domain of each $g_i(\cdot,\delta)$ and $d_i  = \dim (\mathcal{X}_i)$, where $\dim (\mathcal{X}_i)$ denotes the dimension of the smallest subspace of $\mathbb{R}^d$ containing $\Xs_i$. We further assume that 
$d_i<d$ for at least one $i=1,\ldots,M$ to exclude the case where all constraint functions depend on all the decision variables; if this is not the case the subsequent analysis reduces to the standard scenario approach of Section \ref{sec:scp}. We then have the following theorem due to \cite{georg2013}, which serves as the multi-stage counterpart of Theorem \ref{thm:scp}.

\begin{thm}[\cite{georg2013}, Theorem 4.1]
\label{thm:extscp}
For each $i=1,\ldots,M$, choose $\varepsilon_i, \beta_i \in (0,1)$, and fix ${S}_i\geq S(\eps_i,\beta_i,d_i)$ where
\begin{align}
	\begin{split}
	S(\varepsilon_i, \beta_i,d_i):=\min \left\{ N\in\Ne \ \bigg | \ \sum_{j=0}^{d_i-1} \binom{N}{j}\varepsilon_i^j(1-\varepsilon_i)^{N-j} \leq \beta_i\right\}.
	\end{split}
	\label{eq:sample_bound_ext}
\end{align}
Extract $S = \sum_{i=1}^M S_i$ samples i.i.d from $\Delta$ according to a probability measure $\Prb$, construct $\{\Delta_i\}_{i=1}^M$ as in Section \ref{sec:set_up} with $|\Delta_i| = S_i$ and formulate $\scp[\Delta_1,\dots,\Delta_M]$. Under Assumption \ref{ass:feas_uniq}, for each $i=1,\ldots,M$, the minimizer $x^*[\Delta_1,\dots,\Delta_M]$ of $\scp[\Delta_1,\dots,\Delta_M]$ satisfies the chance constraint,
\begin{align}
\ccp_{\eps_i}: \Prb\sqbracks*{g_i\left(x^*[\Delta_1,\dots,\Delta_M],\delta\right) > 0} \leq \eps_i,
	\label{eq:echance_ext}
\end{align}
with confidence (measured with respect to $\mathbb{P}^{S_i}$) at least $1-\beta_i$.
\end{thm}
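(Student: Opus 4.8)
The plan is to reduce Theorem~\ref{thm:extscp} to Theorem~\ref{thm:scp}, applied one constraint at a time to a suitable lower-dimensional convex program obtained by conditioning on the remaining samples and by partially minimizing over the decision variables that do not enter the $i$-th constraint. This is the structural ``saving'' that makes $d$ collapse to $d_i$ in \eqref{eq:sample_bound_ext}.

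Fix $i\in\{1,\dots,M\}$ and condition on the realization $\omega_{-i}$ of all sample batches $\Delta_j$, $j\neq i$; these $S-S_i$ samples are treated as fixed from now on. Let $P_i$ denote the (linear) projection of $\mathbb{R}^d$ onto the smallest subspace containing $\mathcal{X}_i$, so that $P_i x$ has $d_i$ free components and $g_i(x,\delta)$ depends on $x$ only through $P_i x$; by slight abuse of notation write $g_i(y,\delta)$ for $y=P_i x$. Define the partially minimized cost
\begin{align*}
h(y) := \min\bracks*{\, f(x) \ : \ x\in\mathcal{X},\ g_j(x,\delta)\leq 0\ \forall \delta\in\Delta_j,\ j\neq i,\ P_i x = y \,}
\end{align*}
on the domain $\mathcal{Y}:=\{\,P_i x : x\in\mathcal{X},\ g_j(x,\delta)\leq 0\ \forall\delta\in\Delta_j,\ j\neq i\,\}$. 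Since $\mathcal{X}$ is compact and $f,g_j$ are continuous, the minimum is attained and $h$ is a finite convex function on the compact convex set $\mathcal{Y}$. Consider then the reduced scenario program
\begin{align*}
\min_{y\in\mathcal{Y}}\ h(y)\qquad\text{s.t.}\qquad g_i(y,\delta)\leq 0,\ \forall\delta\in\Delta_i,
\end{align*}
which is an instance of $\scp$ with a single convex constraint function, $d_i$ decision variables, and the $S_i$ i.i.d. random constraints $\{g_i(\cdot,\delta)\}_{\delta\in\Delta_i}$.

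The crux is to verify that this reduced program satisfies the hypotheses of Theorem~\ref{thm:scp} (with $d$ replaced by $d_i$) and that its unique minimizer is exactly $P_i x^*[\Delta_1,\dots,\Delta_M]$. Feasibility and non-empty interior of its feasible region follow from Assumption~\ref{ass:feas_uniq} for $\scp[\Delta_1,\dots,\Delta_M]$ by applying $P_i$, which, being a linear surjection onto its image, is an open map. For the minimizer: the optimal values of the two programs coincide (minimizing $f$ over feasible $x$ equals minimizing over $y\in\mathcal{Y}$ first the inner lift and then $y$), and any $y$ attaining this value admits, by construction of $h$, a lift $x\in\mathcal{X}$ that is feasible and optimal for $\scp[\Delta_1,\dots,\Delta_M]$; by Assumption~\ref{ass:feas_uniq} that lift equals $x^*[\Delta_1,\dots,\Delta_M]$, so $y=P_i x^*[\Delta_1,\dots,\Delta_M]$ is the unique reduced minimizer. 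Finally $g_i(x^*[\Delta_1,\dots,\Delta_M],\delta)=g_i(P_i x^*[\Delta_1,\dots,\Delta_M],\delta)$ for every $\delta$, so the violation event of constraint $i$ at $x^*[\Delta_1,\dots,\Delta_M]$ coincides with the violation event of the reduced minimizer. Invoking Theorem~\ref{thm:scp} for the reduced program, with the bound \eqref{eq:sample_bound_ext} which is precisely \eqref{eq:sample_bound} evaluated at $(\eps,\beta,d)=(\eps_i,\beta_i,d_i)$, yields $\Prb\sqbracks*{g_i(x^*[\Delta_1,\dots,\Delta_M],\delta)>0}\leq\eps_i$ with $\Prb^{S_i}$-confidence at least $1-\beta_i$, conditionally on $\omega_{-i}$.

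Since this conditional statement holds for every realization $\omega_{-i}$, integrating over $\omega_{-i}$ with respect to $\Prb^{S-S_i}$ (Fubini) gives the unconditional claim for index $i$, and the argument is identical for each $i=1,\dots,M$. The main obstacle I expect is the bookkeeping of this reduction: confirming that partial minimization yields a genuine convex program of dimension exactly $d_i$, with no ``leftover'' deterministic constraints on the remaining variables (they are folded into the convex objective $h$), whose minimizer is the projection of the original one, and that feasibility, interior non-emptiness, and measurability of the minimizer are all preserved under $P_i$ and under the $\min$ in $h$ --- this is exactly where the assumption $d_i<d$ is used and where the complexity reduction relative to Theorem~\ref{thm:scp} originates.
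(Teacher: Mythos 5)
Your argument is sound, but note first that the paper does not actually prove Theorem~\ref{thm:extscp}: it imports the statement from \cite{georg2013} (Theorem~4.1), so the relevant comparison is with the proof given there. That proof works directly on $\scp[\Delta_1,\dots,\Delta_M]$: it introduces the \emph{support rank} of each constraint family (here at most $d_i$, since $g_i$ is unconstrained along a subspace of codimension $d_i$), shows that the number of support constraints contributed by the $i$-th family is almost surely bounded by this rank, and then reruns the Campi--Garatti counting argument for each family separately while conditioning on the samples of the other families. Your route is genuinely different: by conditioning on $\{\Delta_j\}_{j\neq i}$ and partially minimizing over the coordinates that do not enter $g_i$, you collapse the whole problem into a single $d_i$-dimensional scenario program with $S_i$ i.i.d.\ constraints and invoke Theorem~\ref{thm:scp} as a black box, avoiding any new combinatorial lemma. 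The price is exactly the bookkeeping you flag, and you discharge it correctly: $h$ is convex and finite on the compact convex set $\mathcal{Y}$ by joint convexity and compactness of $\Xs$; feasibility and non-empty interior pass through the surjective (hence open) linear map $P_i$; uniqueness of the reduced minimizer follows because two distinct optimal $y$'s would lift to two distinct optimizers of the full program, contradicting Assumption~\ref{ass:feas_uniq}; and the violation event of $g_i$ at $x^*[\Delta_1,\dots,\Delta_M]$ coincides with that at $P_i x^*[\Delta_1,\dots,\Delta_M]$. Two minor remarks: the hypothesis that $d_i<d$ for some $i$ is not actually used in either proof --- it only rules out the degenerate case where the multi-stage formulation collapses to the standard one --- and the final integration over $\omega_{-i}$ is needed only if one reads the confidence as marginal; the theorem's phrasing ``measured with respect to $\mathbb{P}^{S_i}$'' is precisely the conditional statement you establish before integrating.
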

As with Theorem \ref{thm:scp}, each $d_i$ can be replaced by a tighter upper bound on the support constraints of $g_i$. Equation \eqref{eq:echance_ext} in Theorem \ref{thm:extscp} establishes the feasibility properties of $x^*[\Delta_1,\dots,\Delta_M]$ for each separate constraint. However, no guarantees are provided on the probability that $x^*[\Delta_1,\dots,\Delta_M]$ satisfies all constraints simultaneously, i.e. $\ccp_{\eps}$ in \eqref{eq:echance}. This issue is addressed by the following corollary, that is a direct implication of Theorem \ref{thm:extscp}.
\begin{corol}
\label{corol:product_space_guarantee}
Fix $\eps,\beta \in(0,1)$ and select $\eps_i,\beta_i\in (0,1)$, for all $i=1,\dots,M$, such that $\sum_{i=1}^M\eps_i=\eps$ and $\sum_{i=1}^M\beta_i=\beta$. Under the set-up of Theorem \ref{thm:extscp} and Assumption \ref{ass:feas_uniq} we have that $\Prb^{S}\sqbracks*{x^*[\Delta_1,\dots,\Delta_M]\models \ccp_{\eps}}\allowbreak \geq 1-\beta$, where $\ccp_{\eps}$ is given in \eqref{eq:echance}.
\end{corol}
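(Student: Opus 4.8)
The plan is to combine the per-constraint guarantees of Theorem~\ref{thm:extscp} by means of two union bounds: one over the uncertainty set $\Delta$, used to pass from individual to simultaneous constraint satisfaction, and one over the sample space $\Delta^{S}$, used to aggregate the associated confidence levels into $1-\beta$.

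First I would fix a multisample $\omega=(\delta^{1},\dots,\delta^{S})\in\Delta^{S}$ and work with the associated minimizer $x^{*}[\Delta_{1},\dots,\Delta_{M}](\omega)$, which is unique by Assumption~\ref{ass:feas_uniq}. Since $g(x,\delta)=\max_{i=1,\dots,M}g_{i}(x,\delta)$, for any $x$ one has $\bracks*{\delta\in\Delta: g(x,\delta)>0}=\bigcup_{i=1}^{M}\bracks*{\delta\in\Delta: g_{i}(x,\delta)>0}$, so subadditivity of $\Prb$ gives $\Prb\sqbracks*{g(x,\delta)>0}\leq\sum_{i=1}^{M}\Prb\sqbracks*{g_{i}(x,\delta)>0}$. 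Hence, if $\omega$ is such that $x^{*}(\omega)\models\ccp_{\eps_{i}}$ for every $i$, i.e.\ $\Prb\sqbracks*{g_{i}(x^{*}(\omega),\delta)>0}\leq\eps_{i}$ for all $i$, then $\Prb\sqbracks*{g(x^{*}(\omega),\delta)>0}\leq\sum_{i=1}^{M}\eps_{i}=\eps$, i.e.\ $x^{*}(\omega)\models\ccp_{\eps}$. Passing to complements, the ``bad'' set of multisamples satisfies the inclusion $\bracks*{\omega: x^{*}(\omega)\not\models\ccp_{\eps}}\subseteq\bigcup_{i=1}^{M}\bracks*{\omega: x^{*}(\omega)\not\models\ccp_{\eps_{i}}}$.

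Next I would bound the $\Prb^{S}$-measure of each set appearing on the right-hand side. Theorem~\ref{thm:extscp} asserts that, for each $i$, the minimizer violates $\ccp_{\eps_{i}}$ with probability at most $\beta_{i}$; although the confidence there is stated with respect to $\Prb^{S_{i}}$, the minimizer $x^{*}[\Delta_{1},\dots,\Delta_{M}]$ is a function of all $S=\sum_{j=1}^{M}S_{j}$ samples, and integrating the corresponding conditional bound over the samples not forming $\Delta_{i}$ (using independence of the batches and the tower property) yields $\Prb^{S}\sqbracks*{x^{*}[\Delta_{1},\dots,\Delta_{M}]\not\models\ccp_{\eps_{i}}}\leq\beta_{i}$. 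Applying subadditivity of $\Prb^{S}$ to the inclusion from the previous step and using $\sum_{i=1}^{M}\beta_{i}=\beta$ then gives $\Prb^{S}\sqbracks*{x^{*}[\Delta_{1},\dots,\Delta_{M}]\not\models\ccp_{\eps}}\leq\sum_{i=1}^{M}\beta_{i}=\beta$, which is exactly the claimed bound $\Prb^{S}\sqbracks*{x^{*}[\Delta_{1},\dots,\Delta_{M}]\models\ccp_{\eps}}\geq 1-\beta$.

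I expect the only delicate point to be this lifting of the confidence statement of Theorem~\ref{thm:extscp} from $\Prb^{S_{i}}$ to the full product measure $\Prb^{S}$ on $\Delta^{S}$, since $\Prb^{S}$ is the measure with respect to which the union bound over $i=1,\dots,M$ must be carried out; once this is in place, the remainder is two applications of Boole's inequality --- one in $\Delta$ and one in $\Delta^{S}$ --- together with the measurability of $x^{*}[\Delta_{1},\dots,\Delta_{M}]$ assumed after Assumption~\ref{ass:feas_uniq}, which guarantees that all the events above are measurable.
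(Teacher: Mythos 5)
Your proposal is correct and follows essentially the same route as the paper's proof: one union bound over $\Delta$ (using $g=\max_i g_i$ to pass from the individual $\ccp_{\eps_i}$ to $\ccp_{\eps}$) and one union bound over the multisample space to aggregate the confidences $\beta_i$ into $\beta$. Your explicit remark on lifting the per-stage confidence from $\Prb^{S_i}$ to $\Prb^{S}$ via independence of the sample batches is a point the paper passes over silently, but it does not change the argument.
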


\begin{proof}
The proof of Corollary \ref{corol:product_space_guarantee} is essentially an application of the Boole-Bonferroni inequalities \cite{prekopa1995stochastic}. By Theorem \ref{thm:extscp} we have that $\Prb^{S}\sqbracks*{x^*[\Delta_1,\dots,\Delta_M]\models \ccp_{\eps_i}}\geq 1-\beta_i,\ \text{for all $i = 1,\dots,M$}$.
By the subadditivity of $\Prb_{\Delta^{S}}$ we have that $\Prb^{S}\sqbracks*{x^*[\Delta_1,\dots,\Delta_M]\models \ccp_{\eps_i}, \text{ for all } i=1,\ldots,M}\geq 1-\sum_{i=1}^M\beta_i=1-\beta.$
To complete the proof it suffices to show that $x^*[\Delta_1,\dots,\Delta_M]\models \ccp_{\eps_i}$ for all $i=1,\ldots,M$, implies that $x^*[\Delta_1,\dots,\Delta_M]\models \ccp_{\eps}$, where $\ccp_{\eps}$ is given in \eqref{eq:echance}. By the subadditivity of $\Prb$, and since $x^*[\Delta_1,\dots,\Delta_M]\models \ccp_{\eps_i}$ is equivalent to  $\Prb\sqbracks*{g_i(x^*[\Delta_1,\dots,\Delta_M],\delta)>0} \allowbreak\leq \eps_i$, we have that $\Prb\sqbracks*{\exists i\in \{1,\ldots,M\} \text{ such that } g_i(x^*[\Delta_1,\dots,\Delta_M],\delta) > 0} \leq \sum_{i=1}^M \eps_i= \eps.$
Since by definition $g(x,\delta):=\max_{i=1,\ldots,M}g_i(x,\delta)$, the last statement implies that $\Prb\sqbracks*{g(x^*[\Delta_1,\dots,\Delta_M],\delta) > 0} \leq \eps$, which is equivalent to $x^*[\Delta_1,\dots,\Delta_M]\models \ccp_{\eps}$ and concludes the proof.
\end{proof}

The computational complexity associated with obtaining $x^*[\Delta_1,\dots,\Delta_M]$ with the feasibility properties of Corollary \ref{corol:product_space_guarantee}, depends on $\{d_i\}_{i=1}^M$ 
and the choices for $\{\varepsilon_i\}_{i=1}^M$,$\{\beta_i\}_{i=1}^M$. The obvious choice of $\varepsilon_i=\varepsilon/M$ and $\beta_i=\beta/M$ for $i=1,\dots,M$ will in general be suboptimal; in Section \ref{sec:complexity} we formulate convex optimization problems to compute better choices. 


\subsection{Recursive scenario approach without re-sampling} \label{sec:mstage_without}
In the sequel we consider $\rcp$ problems with specific structure on the constraint functions that enables us to tackle $\text{SCP}[\Delta_1,\dots,\Delta_M]$ in a sequential manner.
We assume that the constraint functions $g_i (\cdot,\cdot,\cdot):~ \mathcal{X}_i \times \mathcal{X}_{i+1} \times \Delta \rightarrow \mathbb{R}$ are pairwise coupled and convex with respect to their first argument, and $g_M (\cdot,\cdot):~ \mathcal{X}_M \times \Delta \rightarrow \mathbb{R}$. As a consequence of this assumption we have by construction of $\rcp$ that $\mathcal{X} = \mathcal{X}_1 \times \ldots \times \mathcal{X}_M$, which is a special case of the structure assumed in Section \ref{sec:ext_scp}. Let $x = (x_1,\ldots,x_M)$ where $x_i\in\Xs_i$, for each $i=1,\dots,M$. We further assume that the objective function is separable, i.e. $f(x) = \sum_{i=1}^M f_i(x_i)$. Such problem structures appear naturally in SMPC and ADP, as we demonstrate in Section \ref{sec:applications}. The pairwise coupling structure can be relaxed to any form of stage-wise coupling as long as the constraint function at every stage is convex with respect to the decision variables. 


The separable structure assumed, motivates the decomposition of $\text{SCP}[\Delta_1,\dots,\Delta_M]$ into a sequence of coupled scenario programs. For each $i=1,\ldots,M-1$ we define the following parametric scenario program
\begin{align}
\scp_i [x_{i+1},\Delta_i]:
\begin{cases}
	\begin{split}
	\min_{x_i\in\mathcal{X}_i} \quad &f_i(x_i)\\
	 \text{s.t} \quad &g_i\left(x_i,x_{i+1},\delta\right) \leq 0,\ \forall \delta\in\Delta_i
	\end{split}
	\end{cases}
	\label{eq:rob_opt_mstage_samp}
\end{align}
and $\scp_M [\Delta_M]$ analogously, with $g_M(x_M,\delta) \leq 0$ for all $\delta \in \Delta_M$, replacing the corresponding constraint in \eqref{eq:rob_opt_mstage_samp}. We assume that all stage problems in \eqref{eq:rob_opt_mstage_samp} satisfy Assumption \ref{ass:feas_uniq} for any fixed $x_{i+1}\in \Xs_{i+1}$; weaker assumptions are discussed in \cite[Section 4]{compression_paper}.

Consider now the sequence of $M$ pairwise coupled programs $\scp_i\sqbracks*{x_{i+1},\Delta_i}$ and let all sets $\Delta_i$ be identical, i.e. $\Delta_1=\cdots=\Delta_M=\bar{\Delta}$.
Such optimization problems were referred to as cascading programs in \cite{compression_paper}, where the authors study the feasibility properties of a solution generated by sequentially solving a pair of coupled problems using the same set of uncertain scenarios. In particular, the following is a direct consequence of \cite[Theorem 7]{compression_paper}.

\begin{thm}[\cite{compression_paper}, Theorem 7]
\label{thm:recu_scp_wo}
Let $d_i=\dim(\Xs_i)$ be the dimension of the smallest subspace of $\mathbb{R}^d$ containing $\Xs_i$ and $\bar{d}=\sum_{i=1}^M d_i$. Fix $\varepsilon,\beta \in (0,1)$ and $S \geq S(\varepsilon,\beta,\bar{d})$, where $S(\varepsilon,\beta,\bar{d})$ is given by \eqref{eq:sample_bound}. Construct $x^*:=\parens*{x_1^*,\dots,x^*_{M}}$, where each $x^*_i[\bar{\Delta}]$ is recursively computed from (\ref{eq:rob_opt_mstage_samp}) with $\Delta_i=\bar{\Delta}$ for $i=1,\dots,M$. We then have that $\Prb^{S}\sqbracks*{x^*[\bar{\Delta}]\models \ccp_{\eps}}\geq 1-\beta$.
\end{thm}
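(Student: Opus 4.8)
The plan is to derive the statement from the \emph{sample-compression} form of the scenario approach, following \cite[Theorem 7]{compression_paper}. The key point to keep in mind is that $x^*[\bar\Delta]$ is \emph{not} the minimizer of a single convex program --- the recursion minimizes $f_M$, then $f_{M-1}$, and so on, rather than $\sum_i f_i$ jointly --- so one cannot simply invoke Theorem \ref{thm:scp} with $d=\dim(\Xs)=\bar d$. Instead, I would regard the recursion $\bar\Delta\mapsto x^*[\bar\Delta]$ as an abstract decision map and bound the number of its \emph{support samples}, i.e. those $\delta^k\in\bar\Delta$ whose removal changes $x^*[\bar\Delta]$. Once this count is shown to be at most $\bar d$ for every realization of $\bar\Delta$, the conclusion follows from the version of Theorem \ref{thm:scp} in which the decision-space dimension is replaced by any upper bound on the number of support constraints (as noted in the discussion following that theorem), which is exactly what \cite[Theorem 7]{compression_paper} provides.

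The core step is the support-count estimate, carried out stage by stage. Fix a realization of $\bar\Delta$ and run the recursion to obtain $x_M^*,x_{M-1}^*,\dots,x_1^*$. Stage $M$ solves the convex scenario program $\scp_M[\bar\Delta]$ over $\Xs_M$, whose effective dimension is $d_M=\dim(\Xs_M)$; under Assumption \ref{ass:feas_uniq} the classical support-constraint bound (see \cite{calafiore2006}) yields a set $\Ss_M\subseteq\bar\Delta$ with $|\Ss_M|\le d_M$ such that deleting any sample outside $\Ss_M$ leaves $x_M^*$ unchanged. With $x_M^*$ now frozen, stage $M-1$ solves $\scp_{M-1}[x_M^*,\bar\Delta]$, a convex scenario program over $\Xs_{M-1}$ of effective dimension $d_{M-1}$, hence with a support set $\Ss_{M-1}$ of size at most $d_{M-1}$; and so on down the chain. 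Setting $\Ss:=\bigcup_{i=1}^M\Ss_i$, I would verify by induction that $\Ss$ is a support set of the whole recursion: if $\delta^k\notin\Ss$ then $\delta^k\notin\Ss_M$, so removing it leaves $x_M^*$ unchanged; given $x_M^*$ unchanged, $\delta^k\notin\Ss_{M-1}$ forces $x_{M-1}^*$ unchanged; iterating, all of $x_M^*,\dots,x_1^*$ are unchanged, so $\delta^k$ is not a support sample. Therefore $|\Ss|\le\sum_{i=1}^M d_i=\bar d$ for every realization (and, as in Theorems \ref{thm:scp}--\ref{thm:extscp}, each $d_i$ may be replaced by a tighter bound on the support constraints of the $i$-th stage). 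Combining the permutation invariance and well-posedness of $\bar\Delta\mapsto x^*[\bar\Delta]$ --- guaranteed by the stage-wise version of Assumption \ref{ass:feas_uniq} --- with this compression bound places the recursion within the hypotheses of \cite[Theorem 7]{compression_paper}, which gives $\Prb^{S}\sqbracks*{x^*[\bar\Delta]\models\ccp_{\eps}}\ge 1-\beta$ as soon as $S\ge S(\eps,\beta,\bar d)$.

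I expect the main obstacle to be not the dimension bookkeeping but justifying the support-counting argument in the presence of the statistical coupling induced by reusing the \emph{same} sample set $\bar\Delta$ at every stage: one must confirm that the $\Ss$ built above genuinely is a support set of the composite map (the inductive ``unchanged $x_{i+1}^*$ propagates to unchanged $x_i^*$'' step is the heart of this) and that the composite map satisfies the consistency/non-degeneracy conditions under which the scenario bound still holds with the ambient dimension replaced by the compression size. This is precisely what \cite{compression_paper} establishes for cascading programs, and it is exactly what separates the recursive-without-resampling result here from the multi-stage Theorem \ref{thm:extscp}, where independence across stages makes the feasibility argument routine.
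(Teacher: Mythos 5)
The paper itself gives no proof of this theorem---it is imported verbatim as \cite[Theorem 7]{compression_paper}---and your reconstruction is exactly the compression/support-set argument that reference uses for cascading programs: bound the support samples of each stage by $d_i$, show that the union of the stage-wise support sets is a compression set for the composite recursion via the ``unchanged $x_{i+1}^*$ propagates to unchanged $x_i^*$'' induction, and invoke the scenario bound with the decision-space dimension replaced by the compression size $\bar d$. Your outline is correct, and you rightly flag the one delicate point, namely that passing from single-sample removal to the full compression/consistency property needed for the generalization bound relies on the non-degeneracy conditions assumed in \cite{compression_paper}.
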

This recursive scenario based solution can be used to obtain feasibility properties for the solution of each step of the recursion. If $x^*:=\parens*{x_1^*,\dots,x^*_{M}}$ is constructed according to Theorem \ref{thm:recu_scp_wo}, for any fixed $x_{i+1}\in \Xs_{i+1}$, with probability at least $1-\beta_i$, $x^*_i[x_{i+1},\bar{\Delta}]$ satisfies $\Prb \sqbracks*{g_i\parens*{x^*_i[x_{i+1},\bar{\Delta}],x_{i+1},\delta} > 0} \leq \eps_i,$
for any $\varepsilon_i,\beta_i\in (0,1)$ satisfying the equation $S(\eps_i,\beta_i,d_i)\leq S$ where $S$ is chosen such that  $S\geq S(\eps,\beta,\bar{d})$. In this case, however, the values of $\varepsilon_i$ and $\beta_i$ are not set a-priori and are not design choices; they are implicitly determined by the dimension of each subproblem. Consequently, the computational complexity of the recursive scenario approach only depends on $\bar{d}$ and the choice of $\eps,\beta$.

\subsection{Recursive scenario approach with re-sampling}
\label{sec:mstage}
Consider the separable structure assumed in Section \ref{sec:mstage_without} and note that for a fixed $x_{i+1}\in \Xs_{i+1}$, $\scp_i[x_{i+1},\Delta_i]$ is in the form of $\scp[\bar{\Delta}]$ considered in Section \ref{sec:scp}. Fix $\varepsilon_i$ and $\beta_i$ and let the number of samples $S_i$, $i=1,\ldots,M$ be chosen according to \eqref{eq:sample_bound_ext}. For any $x_{i+1}\in \Xs_{i+1}$, $\Delta_i\in \Delta^{S_i}$, let $x^*_i [x_{i+1},\Delta_i] :~ \Xs_{i+1}\times\Delta^{{S}_i}\rightarrow \Xs_i$ be the minimizer of $\scp_i[x_{i+1},\Delta_i]$. Theorem \ref{thm:scp} implies that for all $i=1,\ldots,M-1$, $x^*_i [x_{i+1},\Delta_i]$ satisfies the chance constraint
\begin{align}
\ccp_{\eps_i}[x_{i+1}]:	 \Prb\sqbracks*{g_i\parens*{x^*_i[x_{i+1},\Delta_i],x_{i+1},\delta} > 0} \leq \eps_i,
	\label{eq:echance_stage}
\end{align}
with probability at least $1 - \beta_i$, while for $i=M$, $x_M^*[\Delta_M]$ satisfies $\ccp_{\varepsilon_M}$ with probability at least $1-\beta_M$.

Using the parametrized scenario optimization problems in the form of (\ref{eq:rob_opt_mstage_samp}) we can recursively construct a decision vector $x^*:=\parens*{x_1^*,\dots,x^*_{M}}$, where for each $i=1,\ldots,M-1$ the optimizer $x^*_i [x_{i+1},\Delta_i]$ can be written as $x^*_i[\Delta_i,\dots,\Delta_{M}]:~ \Delta^{S_{i}} \times \cdots \times \Delta^{S_{M}}\rightarrow \Xs_{i}$, satisfying
\begin{align}
\Prb^{S_{i}}\sqbracks*{x^*_{i}[\Delta_{i},\dots,\Delta_{M}]\models \ccp_{\eps_{i}}\Big [x^*_{i+1}[\Delta_{i+1},\dots,\Delta_{M}] \Big ] }\geq 1-\beta_{i}, \label{eq:ebchance_resampling}
\end{align}
and for $i=M$, $\Prb^{S_{M}} \sqbracks*{x^*_M \models \ccp_{\varepsilon_M}} \geq 1-\beta_M$. Note that due to the recursive process, $x_i^*$ depends implicitly on all sets $\Delta_i, \ldots, \Delta_M$. The following theorem can be used to compare the feasibility properties of a solution constructed in this way with a solution obtained using Theorems \ref{thm:scp},\ref{thm:extscp} and \ref{thm:recu_scp_wo}.
\begin{thm}
\label{thm:recursive_scp}
Fix $\eps,\beta \in(0,1)$ and select $\eps_i,\beta_i\in (0,1)$, for $i=1,\dots,M$, such that $\sum_{i=1}^M\eps_i=\eps$ and $\sum_{i=1}^M\beta_i=\beta$. Construct $x^*:=\parens*{x_1^*,\dots,x^*_{M}}$, where each $x^*_i[\Delta_i,\dots,\Delta_{M}]$ is recursively computed from (\ref{eq:rob_opt_mstage_samp}), satisfying \eqref{eq:ebchance_resampling}.
Let $S=\sum_{i=1}^{M}{S_i}$ with $\{S_i\}_{i=1}^M$ chosen according to \eqref{eq:sample_bound_ext}. We then have that $\Prb^{S}\sqbracks*{x^*[\Delta_{1},\dots,\Delta_{M}]\models \ccp_{\eps}}\geq 1-\beta$.
\end{thm}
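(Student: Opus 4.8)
The plan is to mimic the proof of Corollary \ref{corol:product_space_guarantee}, with the crucial extra work being to handle the fact that the stage-wise guarantees in \eqref{eq:ebchance_resampling} are \emph{conditional} on the decision variable $x^*_{i+1}$ produced by the later stages, rather than being guarantees for a fixed parameter. So first I would establish the marginal (unconditional) stage-wise guarantee
\begin{align}
\Prb^{S}\sqbracks*{x^*_{i}[\Delta_{i},\dots,\Delta_{M}]\models \ccp_{\eps_{i}}\bigl[x^*_{i+1}[\Delta_{i+1},\dots,\Delta_{M}]\bigr]}\geq 1-\beta_{i}, \label{eq:plan_marginal}
\end{align}
for each $i=1,\dots,M-1$, where now the probability is over the full product measure $\Prb^{S}$ on $\Delta^{S_i}\times\cdots\times\Delta^{S_M}$ (the samples for stages $1,\dots,i-1$ being irrelevant). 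This follows by conditioning: fix the samples $\Delta_{i+1},\dots,\Delta_{M}$, which fixes the value $\bar{x}_{i+1}:=x^*_{i+1}[\Delta_{i+1},\dots,\Delta_{M}]\in\Xs_{i+1}$; then \eqref{eq:echance_stage} — i.e.\ Theorem \ref{thm:scp} applied to $\scp_i[\bar{x}_{i+1},\Delta_i]$, which is a legitimate instance of $\scp[\bar\Delta]$ by the observation preceding \eqref{eq:echance_stage} — gives that the conditional probability over $\Delta_i\in\Delta^{S_i}$ of the event in \eqref{eq:plan_marginal} is at least $1-\beta_i$. Since this holds for every realization of $\Delta_{i+1},\dots,\Delta_M$, taking expectation over those samples (using independence and Fubini/the tower property, with measurability of $x^*_{i+1}$ assumed as noted in the paper) yields \eqref{eq:plan_marginal}. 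For $i=M$ the statement $\Prb^{S_M}[x^*_M\models\ccp_{\eps_M}]\geq 1-\beta_M$ is immediate from Theorem \ref{thm:scp}.

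Next I would apply subadditivity of $\Prb^{S}$ over the $M$ events in \eqref{eq:plan_marginal} (and the $i=M$ event) to conclude that, with probability at least $1-\sum_{i=1}^M\beta_i = 1-\beta$, the decision vector $x^*=(x^*_1,\dots,x^*_M)$ \emph{simultaneously} satisfies, for every $i=1,\dots,M-1$,
\begin{align}
\Prb\sqbracks*{g_i\parens*{x^*_i,x^*_{i+1},\delta} > 0} \leq \eps_i, \qquad \text{and} \qquad \Prb\sqbracks*{g_M\parens*{x^*_M,\delta}>0}\leq \eps_M. \label{eq:plan_stagewise}
\end{align}
Here it is essential that once we are on the good event (of $\Prb^{S}$-measure $\geq 1-\beta$), the decisions $x^*_1,\dots,x^*_M$ are all fixed deterministic points in $\Xs_1,\dots,\Xs_M$, so the inner probabilities in \eqref{eq:plan_stagewise} are over a fresh $\delta\sim\Prb$ and the conditional statement \eqref{eq:echance_stage} becomes an honest bound at the realized point $x^*_{i+1}$.

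Finally I would close exactly as in Corollary \ref{corol:product_space_guarantee}: conditioned on the good event, write $g(x^*,\delta)=\max_i g_i(x^*_i,x^*_{i+1},\delta)$ (with the convention that the $M$-th term is $g_M(x^*_M,\delta)$), so that $\{g(x^*,\delta)>0\}=\bigcup_{i=1}^M\{g_i(\cdot)>0\}$, and apply subadditivity of $\Prb$ together with \eqref{eq:plan_stagewise} to get $\Prb[g(x^*,\delta)>0]\leq\sum_{i=1}^M\eps_i=\eps$, i.e.\ $x^*\models\ccp_\eps$. Combining with the outer $1-\beta$ bound gives $\Prb^{S}[x^*[\Delta_1,\dots,\Delta_M]\models\ccp_\eps]\geq 1-\beta$. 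The main obstacle, and the only place this proof differs substantively from Corollary \ref{corol:product_space_guarantee}, is the first step: carefully justifying the passage from the conditional guarantee \eqref{eq:echance_stage} (valid for each \emph{fixed} $x_{i+1}$) to the marginal guarantee \eqref{eq:plan_marginal} for the \emph{random} argument $x^*_{i+1}[\Delta_{i+1},\dots,\Delta_M]$, which relies on the independence of the sample batches across stages and on measurability of the intermediate minimizers; everything downstream is a routine union bound.
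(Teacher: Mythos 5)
Your proof is correct and follows essentially the same route as the paper's: the paper combines the same three ingredients---lifting the conditional stage-wise guarantees \eqref{eq:ebchance_resampling} to the full product measure via independence of the sample batches, a union bound in the confidence, and a union bound in the violation probability---but organizes them as a backward induction from stage $M$ down to stage $1$ (accumulating $\bar{\eps}_i=\sum_{k\geq i}\eps_k$ and $\bar{\beta}_i=\sum_{k\geq i}\beta_k$), whereas you establish all marginal guarantees first and then perform both union bounds in one pass. The substantive point you flag---passing from the guarantee at a fixed $x_{i+1}$ to one at the random $x^*_{i+1}[\Delta_{i+1},\dots,\Delta_M]$ by conditioning on the later sample batches---is exactly the step the paper handles with its appeal to independence, so the two arguments are the same in content.
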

\begin{proof}
Let $\bar{S}_i=\sum_{k=i}^M S_k$, $\bar{\eps}_i=\sum_{k=i}^M \eps_k$, $\bar{\beta}_i=\sum_{k=i}^M \beta_k$, $\bar\Delta_i=(\Delta_i,\dots,\Delta_M)$ and $\bar{x}^*_i=\parens*{x_i^*,\dots,x^*_{M}}$.
We claim that for all $i=1,\ldots,M$, the following statement holds
\begin{align}
\label{eq:ind_stmt}
\begin{split}
&\Prb^{\bar{S}_{i}} \Big [ \Prb \big [ g_{M}\parens*{x^*_M[\bar{\Delta}_M],\delta}>0  \text{ or } \\
& \exists k \in \mathbb{N}_+, i \leq k < M :~ g_{k}\parens*{x^*_k[\bar{\Delta}_k],x^*_{k+1}[\bar{\Delta}_{k+1}],\delta}>0 \big ] \leq \bar{\eps}_i \Big ]\geq 1- \bar{\beta}_i. 
\end{split}
\end{align}
The statement of the claim implies that, with confidence at least $1- \bar{\beta}_i$, $\bar{x}^*_i$ satisfies all constraints with indices greater than or equal to $i$, with probability at least $1-\bar{\eps}_i$.

If the claim holds, then for $i=1$ we get the result. We show that the claim holds using induction.
For $i=M$, \eqref{eq:ind_stmt} is trivially satisfied since $\bar{\Delta}_M = \Delta_M$ and $\scp_M [\Delta_M]$ is in the form of $\scp[\bar{\Delta}]$ considered in Section \ref{sec:scp} with $\bar{\Delta}, S, \eps, \beta$ replaced by $\Delta_M, S_M, \eps_M$ and $\beta_M$, respectively. Assume that \eqref{eq:ind_stmt} holds for some $1<i<M$. By \eqref{eq:ebchance_resampling} we have that
\begin{align}
\label{eq:ind_stmt_proof}
\begin{split}
&\Prb^{S_{i-1}} \Big [ \Prb\big [ g_{i-1}\parens*{x^*_{i-1}[\bar{\Delta}_{i-1}],x^*_{i}[\bar{\Delta}_i],\delta}>0 \big ] \leq \eps_{i-1} \Big ] \geq 1- \beta_{i-1}. 
\end{split}
\end{align}
Using the fact that all samples are extracted independently, and \eqref{eq:ind_stmt}, \eqref{eq:ind_stmt_proof}, hold for any uncertainty realization not in $\bar{\Delta}_{i}$ and $\bar{\Delta}_{i-1}$, respectively, \eqref{eq:ind_stmt}, \eqref{eq:ind_stmt_proof} would also hold with
$\Prb^{\bar{S}_{i-1}}$ in place of $\Prb^{\bar{S}_{i}}$ and $\Prb^{S_{i-1}}$. From the resulting statements and the subadditivity of $\Prb^{\bar{S}_{i-1}}$ and $\Prb$, we can then show analogously to the proof of Corollary \ref{corol:product_space_guarantee} that \eqref{eq:ind_stmt}
holds with $i-1$ in place of $i$.
The latter implies that $\Prb^{\bar{S}_{i-1}} \sqbracks*{ \bar{x}^*_{i-1} \models \ccp_{\bar{\eps}_{i-1}} } \geq 1- \bar{\beta}_{i-1}$ and proves the claim.
\end{proof}

The computational complexity associated with obtaining $x^*[\Delta_1,\dots,\Delta_M]$ with the feasibility properties of Theorem \ref{thm:recursive_scp} depends on $\{d_i\}_{i=1}^M$ and the choices for $\{\varepsilon_i\}_{i=1}^M$,$\{\beta_i\}_{i=1}^M$. Notice that unlike the recursive scenario approach without re-sampling, $\{\eps_i\}_{i=1}^M$ and $\{\beta_i\}_{i=1}^M$ are again design parameters for $i=1,\dots,M$ and can be chosen in a way that reduces the computational complexity of the algorithm used to solve the corresponding optimization problems. We deal with this issue in the next section.

\subsection{Complexity optimization} \label{sec:complexity}
For the problems in Sections \ref{sec:scp} and \ref{sec:mstage_without}, the number of decision variables $d$ and $\sum_{i=1}^M d_i$ and the overall violation and confidence levels $\varepsilon$ and $ \beta$ determine the total complexity. For Sections \ref{sec:ext_scp} and \ref{sec:mstage} on the other hand, although the overall violation and confidence are chosen a priori, the stage-wise levels $\{\eps_i\}_{i=1}^M$ and $\{\beta_i\}_{i=1}^M$ are typically not fixed by the problem data and constitute a design choice that can affect the computational complexity due to the cubic dependence of SOCP solvers on the total number of samples and decision variables. Since the values of $\{d_i\}_{i=1}^M$ are fixed by problem data and generating samples from $\Delta$ can be hard, we focus on minimizing the total number of samples as an approximation to minimizing the total complexity. Throughout this section we replace the implicit sample size bound $S(\varepsilon_i, \beta_i,d_i):=\min \left\{ N\in\Ne \ \bigg | \ \sum_{j=0}^{d_i-1} \binom{N}{j}\varepsilon_i^j(1-\varepsilon_i)^{N-j} \leq \beta_i\right\}$ that upper bounds the required sample size by the explicit bound $S(\varepsilon_i, \beta_i,d_i)\geq \tfrac{e}{e-1} \tfrac{1}{\eps_i}\parens*{d_i-1+\ln\parens*{\tfrac{1}{\beta_i}}}$ due to \cite{alamo2010sample}. For simplicity we treat the right-hand-side as an integer.

\begin{prop}
Consider the setup of Sections \ref{sec:ext_scp} and \ref{sec:mstage} where for each $i=1,\ldots,M$ the values of $d_i=\dim\parens*{\Xs_i}$ and $d=\dim\parens*{\Xs}$ are fixed by the problem data. Fix $\varepsilon, \beta \in (0,1)$. The problem of selecting $\{\varepsilon_i,\beta_i \in (0,1)\}_{i=1}^M$ with $\sum_{i=1}^{M}\eps_i\leq \varepsilon,\ \sum_{i=1}^{M}\beta_i\leq \beta$ that minimize the total number of samples $\sum_{i=1}^M{S(\varepsilon_i, \beta_i,d_i)}$, is a convex optimization program of the form:
\begin{align}
\label{eq:min_complexity_ext}
	\begin{split}
	\min_{\{\eps_i,\beta_i\}_{i=1}^M}\quad &\sum_{i=1}^{M} S(\varepsilon_i,\beta_i,d_i)\\
	\text{subject to:}		
		\quad &\sum_{i=1}^{M}\eps_i\leq \varepsilon,\sum_{i=1}^{M}\beta_i\leq \beta,\eps_i,\beta_i>0, \forall i\in\{1,\dots,M\}.\\
	\end{split}	
\end{align}
\label{prop:min_complex_ext}
\end{prop}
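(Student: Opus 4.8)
The plan is to verify the two defining ingredients of a convex program separately: that the feasible set is convex and that the objective is a convex function on it. The feasible set is immediate, since the constraints $\sum_{i=1}^M \varepsilon_i \le \varepsilon$, $\sum_{i=1}^M \beta_i \le \beta$, $\varepsilon_i > 0$ and $\beta_i > 0$ are all linear (half-space) constraints, so their intersection is convex; note also that these constraints force $\varepsilon_i < \varepsilon < 1$ and $\beta_i < \beta < 1$, so the domain $(0,1)^{2M}$ on which the explicit sample-size bound $S(\varepsilon_i,\beta_i,d_i)=\tfrac{e}{e-1}\,\varepsilon_i^{-1}\bigl(d_i-1+\ln(1/\beta_i)\bigr)$ is defined is respected automatically. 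Hence the only real work is to show that $(\varepsilon_1,\beta_1,\dots,\varepsilon_M,\beta_M)\mapsto\sum_{i=1}^M S(\varepsilon_i,\beta_i,d_i)$ is convex.

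Since the objective is a sum over $i$ and the $i$-th summand depends only on the pair $(\varepsilon_i,\beta_i)$, it suffices to show that each map $(\varepsilon_i,\beta_i)\mapsto S(\varepsilon_i,\beta_i,d_i)$ is jointly convex; the full objective is then a sum of convex functions of disjoint blocks of coordinates, hence convex. I would write the summand as $\tfrac{e}{e-1}\,c_i(\beta_i)/\varepsilon_i$ with $c_i(\beta):=d_i-1-\ln\beta$, which is strictly positive and smooth on $(0,1)$, and analyse the $2\times2$ Hessian of $(\varepsilon,\beta)\mapsto c_i(\beta)/\varepsilon$. Its top-left entry is $2c_i(\beta)\varepsilon^{-3}>0$ and its determinant equals $\varepsilon^{-4}\bigl(2c_i(\beta)c_i''(\beta)-c_i'(\beta)^2\bigr)$; since $c_i'(\beta)=-1/\beta$ and $c_i''(\beta)=1/\beta^2$, this determinant is nonnegative precisely when $c_i(\beta)\ge\tfrac12$, i.e. when $d_i-1+\ln(1/\beta)\ge\tfrac12$. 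A symmetric $2\times2$ matrix with positive top-left entry and nonnegative determinant is positive semidefinite, so under this condition each summand, and therefore the objective, is convex. An equivalent and slightly cleaner route is to note that $c_i(\beta)/\varepsilon=(\sqrt{c_i(\beta)})^2/\varepsilon$ is the composition of the jointly convex, coordinatewise nondecreasing-on-$\mathbb{R}_{\ge0}$ quadratic-over-linear function $(u,\varepsilon)\mapsto u^2/\varepsilon$ with the affine map in $\varepsilon$ and the map $\beta\mapsto\sqrt{c_i(\beta)}$, the latter being convex under the very same condition $c_i\ge\tfrac12$.

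I expect the genuinely delicate point — the main obstacle — to be exactly this joint convexity of the mixed term $\varepsilon_i^{-1}\ln(1/\beta_i)$: it is convex in $\varepsilon_i$ for fixed $\beta_i$ and convex in $\beta_i$ for fixed $\varepsilon_i$, but separate convexity does not imply joint convexity, and in fact joint convexity fails on the full square $(0,1)^2$ when $d_i=1$ and $\beta_i$ is close to $1$. The remedy is to observe that the condition $d_i-1+\ln(1/\beta_i)\ge\tfrac12$ is innocuous in the scenario setting: it holds for every $\beta_i$ whenever $d_i\ge2$ (the left-hand side then exceeds $1$), and when $d_i=1$ it reduces to $\beta_i\le e^{-1/2}$, which is automatic because $\beta_i<\beta$ and the overall confidence parameter $\beta$ is invariably taken extremely small (the sample bound depends on it only through $\ln(1/\beta)$). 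One can therefore either assume this regime at the outset or append the harmless constraints $\beta_i\le e^{-1/2}$, which keep the feasible set convex, and the proposition follows. Strict convexity of the objective, useful if one wants uniqueness of the optimal split, can be read off the same Hessian, whose determinant is strictly positive whenever $c_i(\beta_i)>\tfrac12$.
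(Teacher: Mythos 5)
Your proposal follows the same route as the paper's own proof---reduce to convexity of each summand $(\varepsilon_i,\beta_i)\mapsto S(\varepsilon_i,\beta_i,d_i)$ and check the $2\times 2$ Hessian---but your more careful computation exposes a genuine (if minor) flaw in the paper's argument. The paper asserts that the Hessian is positive definite for \emph{any} $\varepsilon_i,\beta_i\in(0,1)$, whereas its determinant equals $\bigl(\tfrac{e}{e-1}\bigr)^2\varepsilon_i^{-4}\beta_i^{-2}\bigl(2(d_i-1+\ln(1/\beta_i))-1\bigr)$, which is negative when $d_i=1$ and $\beta_i>e^{-1/2}$; so separate convexity holds everywhere but joint convexity does not. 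Your condition $d_i-1+\ln(1/\beta_i)\ge\tfrac12$ is exactly the right one, your observation that it is automatic for $d_i\ge 2$ (and harmless to enforce via the extra linear constraint $\beta_i\le e^{-1/2}$ when $d_i=1$, a regime that is anyway always met in practice since $\beta_i\le\beta$ and $\beta$ is taken small) is the correct repair, and your quadratic-over-linear composition $\bigl(\sqrt{c_i(\beta_i)}\bigr)^2/\varepsilon_i$ gives a clean alternative certificate of the same fact. In short, your proof is not merely correct; it is a strengthening and correction of the paper's one-line argument.
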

\begin{proof}

The function $S(\eps_i,\beta_i,d_i)$ is convex with respect to $\eps_i,\beta_i$ since the Hessian matrix is positive definite for any $\eps_i,\beta_i\in (0,1)$. As a result, $\sum_{i=1}^M{S(\eps_i,\beta_i,d_i)}$ is the sum of convex functions.
\end{proof}
The objective function of problem \eqref{eq:min_complexity_ext} is not in a standard form compatible with commercially available optimization software. As a result, one needs to implement a first or second order method to solve \eqref{eq:min_complexity_ext} (see for example \cite{nesterov2013universal}) taking advantage of the fact that both the gradient and Hessian matrix of the objective function are bounded with respect to $\varepsilon_i,~ \beta_i$ in $[\mu,1)$ for any $\mu>0$. Fixing the confidence levels $\beta_i$ a priori (e.g. $\beta_i =\beta/M$) simplifies the structure of (\ref{eq:min_complexity_ext}) significantly and transforms the problem into a standard semi-definite program (SDP).

\begin{prop}
Choose $\beta\in (0,1)$ and fix the stage-wise confidence levels $\{\beta_i \in (0,1)\}_{i=1}^M$ such that $\sum_{i=1}^M\beta_i\leq \beta$. Fix $\varepsilon \in (0,1)$. For $c_i=\tfrac{e}{e-1} \parens*{d_i-1+\ln\parens*{\tfrac{1}{\beta_i}}}$, $i=1,\ldots,M$, the following SDP is equivalent to \eqref{eq:min_complexity_ext}.
\begin{align}
	\begin{split}
	\min_{\{t_i,\eps_i\}_{i=1}^M}\quad &\sum_{i=1}^{M}t_i\\
	\text{subject to:} &\bM t_i & \sqrt{c_i}\\\sqrt{c_i} & \eps_i \eM \succcurlyeq 0, \sum_{i=1}^{M}\eps_i\leq\eps, \eps_i>0, \forall i\in\{1,\dots,M\}\\
	\end{split}
	\label{eq:min_samples_eps}
\end{align}
\end{prop}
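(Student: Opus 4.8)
The plan is to recognize \eqref{eq:min_samples_eps} as the standard semidefinite reformulation of a separable \emph{hyperbolic} program and to verify the reduction through the Schur complement. First I would note that once $\{\beta_i\}_{i=1}^M$ is held fixed at the chosen values (so that ``equivalent to \eqref{eq:min_complexity_ext}'' is read as ``equivalent to \eqref{eq:min_complexity_ext} with these $\beta_i$''), the constants $c_i=\tfrac{e}{e-1}\parens*{d_i-1+\ln\parens*{\tfrac{1}{\beta_i}}}$ are determined by the problem data and strictly positive (each $d_i\geq 1$ and $\beta_i\in(0,1)$), so $\sqrt{c_i}$ is well defined. Under the explicit-bound convention adopted in this section, $S(\varepsilon_i,\beta_i,d_i)=c_i/\varepsilon_i$, so \eqref{eq:min_complexity_ext} becomes
\begin{align*}
\min_{\{\varepsilon_i\}_{i=1}^M}\ \sum_{i=1}^M \frac{c_i}{\varepsilon_i}\quad \text{s.t.}\quad \sum_{i=1}^M \varepsilon_i \leq \varepsilon,\ \varepsilon_i > 0,\ \forall i\in\{1,\dots,M\}.
\end{align*}

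Next I would pass to epigraph form by introducing variables $t_i$ with the constraints $t_i\geq c_i/\varepsilon_i$; since for any feasible $\{\varepsilon_i\}$ the minimizing choice is $t_i=c_i/\varepsilon_i$, replacing the objective by $\sum_{i=1}^M t_i$ subject to these constraints leaves both the optimal value and the set of optimal $\{\varepsilon_i\}$ unchanged. The crux is then to show that, for $\varepsilon_i>0$, the scalar constraint $t_i\geq c_i/\varepsilon_i$ --- equivalently $t_i\varepsilon_i\geq c_i$ --- is identical to the $2\times 2$ linear matrix inequality appearing in \eqref{eq:min_samples_eps}. This is exactly the Schur complement criterion: with the $(2,2)$ entry $\varepsilon_i>0$, the matrix $\bM t_i & \sqrt{c_i}\\ \sqrt{c_i} & \varepsilon_i\eM$ is positive semidefinite if and only if $t_i-(\sqrt{c_i})\,\varepsilon_i^{-1}(\sqrt{c_i})=t_i-c_i/\varepsilon_i\geq 0$. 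Substituting each scalar constraint by the corresponding LMI turns the epigraph problem into \eqref{eq:min_samples_eps}; its objective $\sum_{i=1}^M t_i$ is linear and its remaining constraints are LMIs together with linear (in)equalities in $(t_i,\varepsilon_i)$, so it is an SDP, and it shares the optimal value and optimal $\{\varepsilon_i\}$ with \eqref{eq:min_complexity_ext} at the fixed $\{\beta_i\}$.

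I do not anticipate a serious obstacle; the only points needing a little care are (i) confirming $c_i>0$ so the LMI is well posed, and (ii) the boundary behaviour --- an LMI with $\varepsilon_i=0$ would force $\sqrt{c_i}=0$, hence $c_i=0$, which is excluded, so the LMIs automatically enforce $\varepsilon_i>0$ and are consistent with the strict positivity written in \eqref{eq:min_samples_eps}. If one wishes to say more, one may also note that Slater's condition holds for \eqref{eq:min_samples_eps} (take $\varepsilon_i=\varepsilon/(M+1)$ and $t_i$ large), so the reformulated SDP is strictly feasible, though this is not needed for the equivalence claim itself.
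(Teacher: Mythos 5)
Your proposal is correct and follows essentially the same route as the paper: rewrite the objective of \eqref{eq:min_complexity_ext} (at fixed $\{\beta_i\}$) as $\sum_{i=1}^M c_i/\eps_i$, pass to epigraph form, and convert each hyperbolic constraint $t_i\eps_i\geq c_i$ into the $2\times 2$ LMI via the Schur complement. Your additional remarks on $c_i>0$ and the boundary case $\eps_i=0$ are correct details the paper leaves implicit.
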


\begin{proof}
The objective function in \eqref{eq:min_complexity_ext} can be written as $\sum_{i=1}^M c_i/\eps_i$. Writing the problem in standard epigraph form and using Schur's complement we end up with the constraints in \eqref{eq:min_samples_eps}.
\end{proof}


\begin{table*}[tp]
\scriptsize
\centering
\caption{Complexity characteristics of the methods presented in Sections \ref{sec:scp}- \ref{sec:mstage}.}
\begin{tabular}{l|l|l|l|l}
                       & Section \ref{sec:scp} &Section \ref{sec:ext_scp} & Section \ref{sec:mstage_without}  &  Section \ref{sec:mstage}\\ \hline
														Number of problems & 1 &1 &  $M$ & $M$ \\
Samples per problem & $S$ $\sim$ \eqref{eq:sample_bound} & $S_i$ $\sim$ \eqref{eq:sample_bound_ext}& $S$ $\sim$ \eqref{eq:sample_bound}, $d = \sum_{i=1}^M d_i$& $S_i$ $\sim$ \eqref{eq:sample_bound_ext}\\

Total number of samples    &      $S$              &          $\sum_{i=1}^M{S_i}$                 &                $S$                      &              $\sum_{i=1}^M{S_i}$                     \\
Decision variables per problem          &   $d=\dim\parens*{\Xs}$                &    $d=\dim\parens*{\Xs}$                         &               $ d_i=\dim\parens*{\Xs_i}$                       &              $ d_i=\dim\parens*{\Xs_i}$                    \\
Constraints per problem&    $ MS$               &      $ \sum_{i=1}^M{S_i}$                       &                 $ S$                    &             $ S_i$                        \\ \hline \hline
\textbf{Total complexity (SOCP)}   &     $\mathcal{O}\parens*{\parens*{d+MS}^3}$              &    $\mathcal{O}\left((d+ \sum_{i=1}^M{S_i})^3\right)$                         &                 $\sum_{i=1}^{M}\mathcal{O}\parens*{\parens*{d_i+S}^3}$                &         $\sum_{i=1}^{M}\mathcal{O}\parens*{\parens*{d_i+S_i}^3}$                             \\
\hline
\end{tabular}
\label{tab:all_complexities}
\vspace{-2\baselineskip}
\end{table*}

\section{Discussion and trade-offs} \label{sec:trade}
Each scenario based algorithm presented in Section \ref{sec:gen} assumes a specific structure on the original $\rcp$ to construct a probabilistically feasible solution. Here we discuss differences between the feasibility properties of each solution and analyze the computational complexity of the associated algorithms as a function of design parameters.

\subsection{Structure and feasibility properties}
In contrast to the standard scenario approach of Section \ref{sec:scp}, the multi-stage variant of Section \ref{sec:ext_scp} assumes that the domain of each constraint function in $\rcp$ is restricted to a subset of $\Xs$. By investigating each constraint separately, Theorem \ref{thm:extscp} provides guarantees on the probability that $x^*[\Delta_1,\ldots,\Delta_M]$ satisfies every individual constraint, something that cannot be achieved with the standard scenario approach. In the recursive methodologies of Sections \ref{sec:mstage_without} and \ref{sec:mstage} we further restrict the structure of $\rcp$ by requiring the constraint functions to be pairwise coupled. In this way we relax the assumption regarding the convexity of the constraint functions. In particular, we require $g_i(x_i,x_{i+1},\delta)$ to be convex with respect to $x_i$, but do not require any convexity assumptions for the dependance on $x_{i+1}$. One situation where this can be of advantage is optimization programs with constraint functions that are bi-convex with respect to two decision vectors. Practically, such problems are often solved through a descent algorithm, alternating between optimizing with respect to one of the decision vectors while fixing the other decision vector to the value obtained at the preceding iteration. Theorem \ref{thm:recu_scp_wo} allows us to provide probabilistic guarantees for the feasibility of the solution generated through such a descent algorithm, provided we fix a priori the number of iterations considered. Moreover, using the methodology of Section \ref{sec:mstage_without} which employs the same samples at every step of the recursive methodology ensures monotonicity of the objective function between consecutive steps of the recursion, that is crucial to ensure that the objective function decreases; see \cite[Section 4]{compression_paper}. 


Theorem \ref{thm:scp}, Corollary \ref{corol:product_space_guarantee} and Theorems \ref{thm:recu_scp_wo} and \ref{thm:recursive_scp} all lead to a feasibility statement in the form of $\Prb^S\sqbracks*{x^*[\Delta_{1},\dots,\Delta_{M}]\models \ccp_{\eps}}\geq 1-\beta$. Each method however requires a different number of samples to construct a solution and in turn the space on which the confidence related to the probability of constraint satisfaction is measured differs. In the standard scenario approach the total number of samples $S$ is determined by the value of $d$ and the choice of violation and confidence levels $\varepsilon,\beta$ (inspect \eqref{eq:sample_bound}). Assuming the same choice of $\varepsilon$ and $\beta$, the total number of samples in the multi-stage scenario approach $\sum_{i=1}^M S_i$ can be greater or less than $S$ depending on the values of $\{d_i\}_{i=1}^M$ (inspect \eqref{eq:sample_bound_ext} and the first two columns in Table \ref{tab:all_complexities}). In general, if each $d_i$ is significantly smaller than $d$, then the total number of samples is smaller in the multi-stage scenario approach. The situation is analogous between the recursive scenario approach without and with re-sampling, where the total number of samples will be generally higher in the latter depending on the values of $\{d_i\}_{i=1}^M$ and the choices of $\{\eps_i\}_{i=1}^M$,$\{\beta_i\}_{i=1}^M$ (see the last two columns in Table \ref{tab:all_complexities}). Note that for the multi-stage scenario approach and the recursive scenario approach with re-sampling we can use the methods of Section \ref{sec:complexity} to optimize over $\{\eps_i\}_{i=1}^M$ and $\{\beta_i\}_{i=1}^M$ but there is no guarantee that this will lead to a smaller number of total samples since $\{d_i\}_{i=1}^M$ is fixed by problem data.

\subsection{Complexity}


Both the standard and multi-stage scenario approach of Sections \ref{sec:scp} and \ref{sec:ext_scp} require solving a single problem of the same structure with the same number of decision variables but a potentially different number of constraints. The number of decision variables $d$ is given by problem data, while the number of constraints depends on $d$, $\{d_i\}_{i=1}^M$ and the chosen $\varepsilon,\beta$ and $\{\eps_i\}_{i=1}^M$, $\{\beta_i\}_{i=1}^M$. In the standard scenario approach, we use the same samples $S$ (see \eqref{eq:sample_bound}) for each constraint function leading to a total of $MS$ constraints. In the multi-stage scenario approach, we use different samples $S_i$ (see \eqref{eq:sample_bound_ext}) for each constraint function leading to a total of $\sum_{i=1}^M{S_i}$ constraints, a number that can be minimized over $\{\eps_i\}_{i=1}^M$, $\{\beta_i\}_{i=1}^M$ using the methods of Section \ref{sec:complexity}. The computational complexity of each method is reported in the first two columns of Table \ref{tab:all_complexities}. Depending on the ratio between the minimum value of $\sum_{i=1}^M{S_i}$ and $MS$, either of the two methods might be preferable.



The computational complexity of the recursive methodologies of Sections \ref{sec:mstage_without} and \ref{sec:mstage} depends on the number of decision variables and constraints per subproblem. Each subproblem involves a single constraint function and as a result the number of samples required by Theorems \ref{thm:recu_scp_wo} and \ref{thm:recursive_scp} coincides with the number of constraints. In the recursive scenario approach without re-sampling, we use the same number of samples $S$ in every subproblem which depends on $\bar{d}=\sum_{i=1}^M{d_i}$ and the choice of $\eps,\beta$ (see Theorem \ref{thm:recu_scp_wo}). If $\bar{d}=d$ (as is the case for example in some ADP problems, see Section \ref{sec:applications}), the number of samples coincides with that of the standard scenario approach. In general however, it might very well be that $\bar{d}>d$ (as is the case, for example, in some SMPC problems). In the recursive scenario approach with re-sampling, the number of samples $S_i$ in each subproblem coincides with the number of samples used in the multi-stage scenario approach and depends on $d_i$ and the choice of $\eps_i$, $\beta_i$  (see Theorem \ref{thm:recursive_scp}). As in the multi-stage scenario approach, $\sum_{i=1}^M{S_i}$ can be minimized over $\{\eps_i\}_{i=1}^M$, $\{\beta_i\}_{i=1}^M$ using the methods in Section \ref{sec:complexity}. The computational complexity of both recursive methods is reported in the last two columns of Table \ref{tab:all_complexities}. Whenever applicable, the recursive methods of Sections \ref{sec:mstage_without} and \ref{sec:mstage} can provide significant computational advantages, as illustrated in the next section.

\section{Numerical example: Approximate Dynamic Programming} \label{sec:applications}
Dynamic programming (DP) recursions are widely used to characterize the value function of optimal control problems \cite{bertsekas1995dynamic}. For systems with continuous states, explicitly computing the value function by space discretization methods suffers from the curse of dimensionality, making the process intractable for state spaces of even moderate dimensions. This has motivated the development of sophisticated ADP methods \cite{powell2007approximate}. A recently established methodology is the linear programming approach to ADP \cite{de2003linear} which projects the optimal value function on the span of a pre-selected set of basis functions, intersected with the feasibility region determined by a set of inequality constraints. The authors in \cite{kariotoglou2014adp,kariotoglou2013approximate} developed an algorithm based on the linear programming approach to ADP, specifically to approximate the value function of stochastic reachability problems. In this section we use this algorithm to investigate the relative performance of the alternative scenario program formulations of Section \ref{sec:gen}. We consider a simplified planar unicycle model with additive noise 
\begin{align}
\begin{bmatrix}\delta_1(i+1)\\\delta_2(i+1) \end{bmatrix}=\begin{bmatrix}
						\delta_4(i)\cos(\delta_3(i))+\delta_1(i)  \\
						\delta_4(i)\sin(\delta_3(i))+\delta_2(i)  \
						\end{bmatrix}
						+w_i
						\label{eq:system}
\end{align}
where $\delta_1, \delta_2$ denote linear position, $\delta_3$ yaw angle and $\delta_4$ linear velocity. We assume that $\delta_3$ and $\delta_4$ are control inputs to the system and treat $\delta_1$ and $\delta_2$ as states. The noise terms $w_i\in \mathbb{R}^2$ are assumed to be independent for different $i$ and identically distributed according to a multivariate normal distribution $\mathcal{N}(0,\Sigma)$ with diagonal covariance matrix. The combined state-action space is denoted by $\Delta=\Delta_x \times \Delta_u =\mathbb{R}^2\times \left([-0.5,0.5]\times[-2\pi,2\pi]\right)$ where for $\delta=(\delta_1,\delta_2,\delta_3,\delta_4)=(\delta_x,\delta_u)\in\Delta$, $\delta_x$ corresponds to spatial coordinates while $\delta_u$ to control inputs. The symbol $\delta$ is used for the state and input variables since in the sequel we will be sampling from $\Delta$. Given a target set $T=[0.8,1]^2$, an avoid set $A=[-0.45,0.25]\times[-0.2, 0.15]$ and a collection of time indexed safe sets $\{S_i\}_{i=1}^3=\bracks*{[-1,1]^2,[-0.3,1]^2,[0.4,1]^2}$, the three step reach-avoid problem considered here is to maximize the probability that \eqref{eq:system} reaches $T$ while staying in the corresponding safe region $S_i\setminus A$ for time steps $i=1,2,3$ (see Figure \ref{fig:RAvf}). The authors in \cite{summers2010verification} show that this reach-avoid problem can be solved via a DP recursion:
\begin{align}
\label{eq:DPrecu}
\begin{split}
&V_i^*(\delta_x)=\sup_{\delta_u\in \Delta_u}\{\underbrace{\mathds{1}_T(\delta_x)+\mathds{1}_{(S_i\setminus A )\setminus T}(\delta_x)\int_{\Delta_x}{V^*_{i+1}(y)Q(dy|\delta)}}_{h(\delta_x,\delta_u)}\}\\
&V_{4}^*(\delta_x)=\mathds{1}_T(\delta_x).
\end{split}
\end{align}
where $V_i^*$ denotes the value function at stage $i$, $Q$ denotes the transition kernel of the stochastic process in \eqref{eq:system} and $\mathds{1}_T,\mathds{1}_{(S_i\setminus A )\setminus T}$ denote the indicator functions of the sets $T$ and $(S_i\setminus A )\setminus T$ respectively. We follow the ADP formulation for reach-avoid problems suggested in \cite{de2003linear} and applied in \cite{kariotoglou2014adp,kariotoglou2013approximate} to approximate \eqref{eq:DPrecu}. To this end we express the value function of each step in the DP recursion as a solution to an infinite dimensional linear program:
\begin{align}
\label{eq:inflp}
\begin{split}
V_i^* \in \arg&\inf_{V(\cdot)\in \mathcal{F}} \quad \int_{\Delta_x} V(\delta_x) \nu (\text{d$\delta_x$)} \\
	&\text{subject to} \quad  V(\delta_x)\geq h(\delta_x,\delta_u) , \ \forall \delta\in \Delta
\end{split}
\end{align}
where $\nu$ is a (positive) measure supported on $\Delta_x$ and $\mathcal{F}$ denotes the space of Borel-measurable functions in which, under mild assumptions, $V^*_i$ resides \cite{kariotoglou2014adp}. Problems in the form of \eqref{eq:inflp} are generally intractable and it is common in the literature to restrict the decision space to a finite dimensional subspace of $\mathcal{F}$ to approximate each $V^*_i$.
As suggested in \cite{kariotoglou2014adp}, we restrict the decision space to a set of Gaussian radial basis functions (RBFs) with fixed centers and variances and use their span to approximate each $V_i^*$. Let $\{d_i\}_{i=1}^3=\{200,150,100\}$ denote the cardinality of each basis set over the time horizon and $x=\{x_i\}_{i=1}^3$ with $x_i\in \mathbb{R}^{d_i}$, a collection of vectors corresponding to the weights of each RBF in the set. The reduction in the number of basis elements over the horizon is motivated by the reduction in the size of each safe set $S_i$. We denote by $L_{i}: \mathbb{R}^{d_i} \times \mathbb{R}^{d_{i+1}}\times \Delta \rightarrow \mathbb{R}$ the functions (linear in the first and second arguments) that for $i=1,2$ and each $\delta\in\Delta$ return the difference between the approximate value function at time $i$ and the one-step-ahead reward at time $i+1$ (observe the constraints in \eqref{eq:inflp}).  Each $L_i$ implicitly depends on the safe, avoid and target regions at time $i$ and the weights $x_i,x_{i+1}$ completely determine its value over $\Delta$. For $i=3$, the function is defined as $L_{3}: \mathbb{R}^{d_3} \times \Delta \rightarrow \mathbb{R}$ since the reach-avoid value function at $i=4$ is known \eqref{eq:DPrecu}. Using this notation, the approximate reach-avoid value functions can be computed via a sequence of coupled robust linear programs:
\begin{align}
\label{eq:finite_basis_opt_mult}
\begin{split}
\min_{x_i\in \mathbb{R}^{d_i}} \quad &x_i^\top I_i\\
\text{subject to:} \quad &L_i(x_i,x_{i+1},\delta)\geq 0, \ \forall \delta\in \Delta
	\end{split}
\end{align}
where $I_i$ denotes the element-wise integral over $\Delta$ of each RBF in the basis set with respect to the measure $\nu$. The sequence of problems in \eqref{eq:finite_basis_opt_mult} can be combined to a single problem as:
\begin{align}
\label{eq:finite_basis_opt_single}
\begin{split}
\min_{x\in\mathbb{R}^{d_1+d_2+d_3}} \quad &\sum_{i=1}^3 x_i^\top I_i\\
\text{subject to:} \quad &L_i(x_i,x_{i+1},\delta)\geq 0, \ \forall \delta\in \Delta, \ i=1,2 \\
\quad &L_3(x_3,\delta)\geq 0, \ \forall \delta\in \Delta. \\
	\end{split}
\end{align}
Using the methods presented in Section \ref{sec:gen} to solve \eqref{eq:finite_basis_opt_mult} and \eqref{eq:finite_basis_opt_single} we can obtain an optimal solution for the weight vector $x$, possibly different for each method. Using the optimal weights, we can then directly construct the approximate value function of the stochastic reach-avoid problem for each $i=1,2,3$.

We solved the problem with all methods and Table \ref{tab:adp_results} compares the theoretical feasibility guarantees (column $\varepsilon$) with the empirical ones (column $\hat{\varepsilon}$) along with the associated complexities (columns ``Sampling'' and ``Solver''). The empirical violation values were calculated by uniformly sampling 1000 realizations from $\Delta$, other than those used in the optimization process, and computing the ratio between the number of realizations that resulted in constraint violation and 1000. We highlight with bold the parameters that can be chosen by the user and are not fixed by the problem data; for the multi-stage scenario approach and the recursive scenario approach with re-sampling, we have chosen the violation levels $\varepsilon_i$ at each stage by solving the complexity optimization program in \eqref{eq:min_samples_eps}. The associated confidence levels $1-\beta_i, i=1,2,3$ were all fixed to $0.99$ to achieve an overall confidence $1-\beta$ of at least $0.97$. The basis centers and variances were sampled uniformly at random from each safe set and $(0,0.01]$ respectively. All computations were done on an Intel Core i7 Q820 CPU clocked @1.73 GHz with 16GB of RAM memory, using the Gurobi optimization suite. Figure \ref{fig:RAvf} shows the level sets of the approximation at time $i=1$ restricted on $[-1,1]^2$, constructed using the recursive scenario approach with re-sampling. Even though the optimal value function corresponds to a reach-avoid probability, the values of the approximation go above 1 since it is only an upper bound \cite{kariotoglou2014adp}. The accuracy of the approximation can be increased by increasing the number of basis elements or reducing the values of $\varepsilon, \beta$.

\begin{figure}[t]
\centering
\includegraphics[width=7cm, height=5cm]{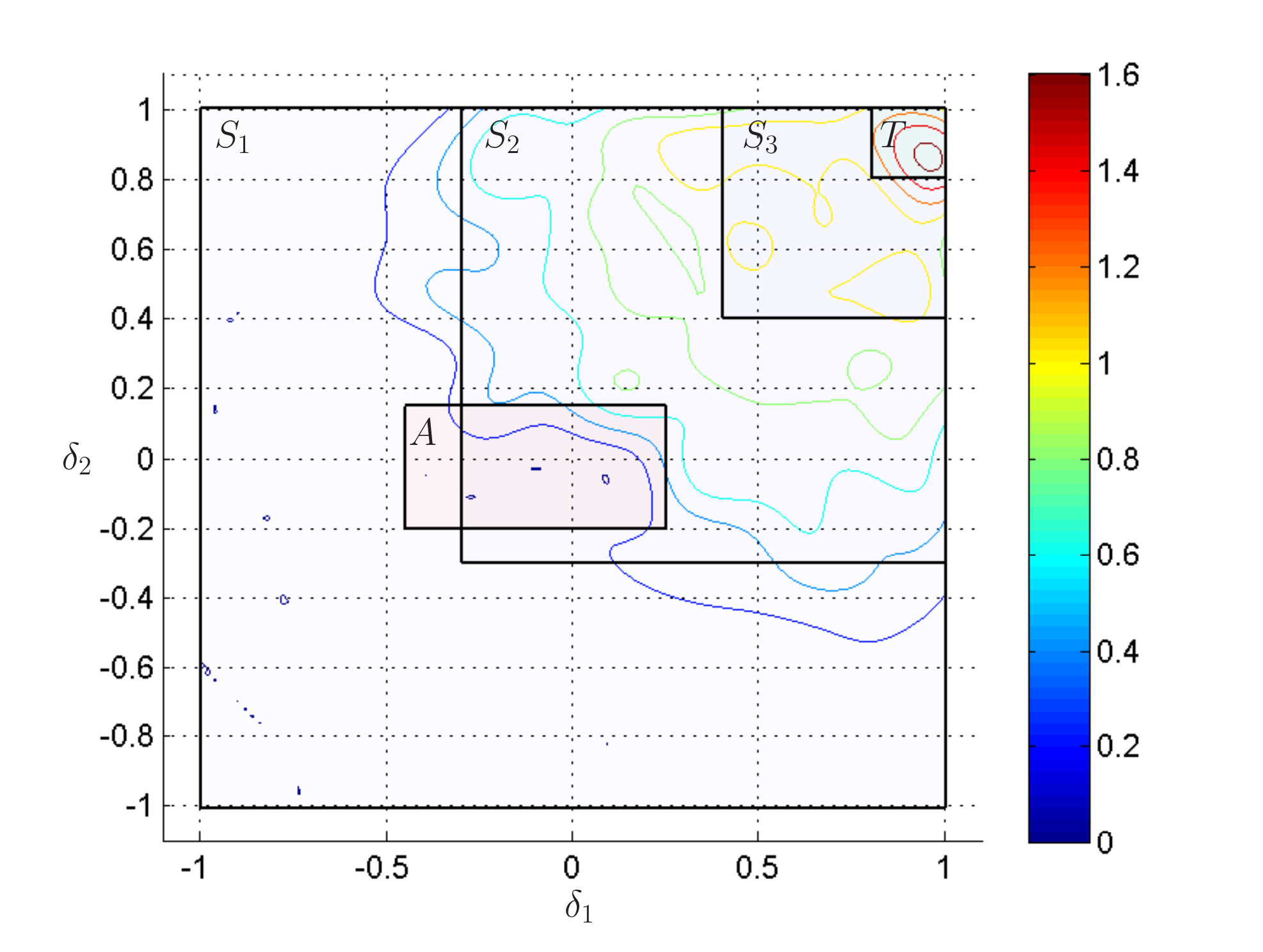}%
\vspace{-1\baselineskip}
\caption{Level sets of the approximate value function at $i=1$ restricted on $[-1,1]^2$, constructed using the method in Section \ref{sec:mstage}.}
\label{fig:RAvf}%
\end{figure}

\begin{table*}[tp]
\scriptsize
\centering
\caption{Results of ADP for reach-avoid using the methods presented in Sections \ref{sec:scp}- \ref{sec:mstage}}
\begin{tabular}{llllllrrr}
&Horizon step&$\eps$&$\hat{\eps}$&$1-\beta$&$d$&Constraints&Solver (sec)&Sampling (sec)\\ \hline \hline
\multirow{4}{*}{Section \ref{sec:scp}}&i=3 &-&0.045&-&-&-&-&-\\
																			&i=2 &-&0.034&-&-&-&-&-\\
																			&i=1 &-&0.028&-&-&-&-&-\\																			
																			&Overall &\textbf{0.1}&0.038&\textbf{0.97}&450&43056&71&0.852\\
																			\hline \hline

\multirow{4}{*}{Section \ref{sec:ext_scp}}&i=3 &\textbf{0.028}&0.027&\textbf{0.99}&100&11865&-&0.8613\\
																			&i=2 &\textbf{0.034}&0.019&\textbf{0.99}&150&14447&-&0.7786\\
																			&i=1 &\textbf{0.039}&0.028&\textbf{0.99}&200&16632&-&0.7411\\
																			&Overall &\textbf{0.1}&0.065&\textbf{0.97}&450&42944&90&2.38\\
																			\hline \hline

\multirow{4}{*}{Section \ref{sec:mstage_without}}&i=3 &-&0.014&-&100& 14352&1.78&-\\
																			&i=2 &-&0.027&-&150& 14352&4.7&-\\
																			&i=1 &-&0.035&-&200& 14352&8.82&-\\
																			&Overall &\textbf{0.1}&0.069&\textbf{0.97}&-&-&15.3&0.7671\\
																			\hline \hline
\multirow{4}{*}{Section \ref{sec:mstage}}&i=3 &\textbf{0.028}&0.009&\textbf{0.99}&100&11865&1.78&0.889\\
																			&i=2 &\textbf{0.034}&0.016&\textbf{0.99}&150&14447&2.65&0.8242\\
																			&i=1 &\textbf{0.039}&0.021&\textbf{0.99}&200&16632&4.76&0.6364\\
																			&Overall &\textbf{0.1}&0.045&\textbf{0.97}&-&-&9.19&2.35\\
\hline \hline
\end{tabular}
\label{tab:adp_results}
\vspace{-2\baselineskip}
\end{table*}

The results in Table \ref{tab:adp_results} indicate that in this instance it is favorable to solve problems in a recursive manner since the same overall violation levels are respected  while the computation times are smaller. Notice that in the standard scenario approach, the total number of samples is three times smaller than the total number of constraints since every sample is enforced on every constraint function in the horizon separately. Moreover, in the multi-stage scenario approach and the recursive scenario approach with re-sampling we have to generate different samples for each constraint function in the horizon and thus sampling consumes more time. The reported solver times differ since differences in the sampled data affect solution time. In particular, the samples used for each of the constraints of the standard scenario approach are identical, giving structure to the problem which appears to be exploited by the solver. For the multi-stage scenario approach different samples are used for each constraint and the resulting optimization program has less structure. The differences in the reported sampling times (even when sample numbers are the same) are a consequence of the hit and run algorithm used to generate them \cite{kroese2011handbook}. The numbers reported are averaged over 10 runs of each method.


\section{Conclusion}
We investigated the feasibility properties of different scenario based optimization programs, involving the standard scenario approach, its multi-stage counterpart as well as recursive variants that can be employed in case the problem exhibits a separable structure. We showed how confidence and violation levels can be treated as optimization assets and can be selected by means of convex optimization problems to reduce the computation time of the associated algorithm. We verified with a numerical example that the recursive structure often encountered in sequential decision making can be exploited, leading to much shorter computation times.

Our future work focuses on utilizing the insights gained in this paper in different problems where the assumed recursive structure is present. We already demonstrated the relevance and benefit of this in a class of approximate dynamic programming algorithms and believe that similar computational advantages will be observed in stochastic model predictive control problems. We also believe that recursive structures appear naturally in multi-agent systems where the decisions of one agent depend on the decision of another; in such cases using different samples between agents can have a significant impact on the required communication bandwidth. In terms of applications, we intend to use the recursive scenario approach discussed here to address surveillance tasks that are posed as reach-avoid problems \cite{kariotoglou2014camaeras}.
\label{sec:conclusion}

\bibliographystyle{IEEEtran}
\bibliography{IEEEabrv,ifamult}
\end{document}